\theoremstyle{plain}
\newtheorem{thm}{Theorem}[section]
\newtheorem{lem}[thm]{Lemma}
\newtheorem{cor}[thm]{Corollary}
\newtheorem{dfn}[thm]{Definition}
\newtheorem{prop}[thm]{Proposition}
\newtheorem{rmk}[thm]{Remark}
\newtheorem{que}[thm]{Question}
\def\M{\mathscr{M}}
\def\N{\mathscr{N}}
\def\S{\mathscr{S}}
\def\Rset{\mathbb{R}}
\def\Sset{\mathbb{S}}
\def\Tset{\mathbb{T}}
\def\M{\widetilde{\mathcal{M}}}
\def\N{\widetilde{\mathcal{N}}}
\def\A{\widetilde{\mathcal{A}}}
\def\I{\mathcal{I}}
\def\S{\mathcal{S}}
\def\m{\mathfrak{M}}
\def\E{\mathcal{E}}
\def\supp{\mathrm{supp}}
\def\epsilon{\varepsilon}
\begin{document}


\title[]%
{Properties of action-minimizing sets and weak KAM solutions via Mather's averaging functions}

\author[S. Motonaga]{Shoya Motonaga}

\address{Research Organization of Science and Technology,
Ritsumeikan University, 1-1-1 Noji-higashi, Kusatsu, Shiga 525-8577, JAPAN}

\email{motonaga@fc.ritsumei.ac.jp}

\date{\today}
\subjclass[2010]{37J50; 70H03; 70H05; 37J35; 70H20}
\keywords{%
Aubry-Mather theory, Tonelli Hamiltonian,
Mather's averaging function, $C^0$ integrability,
weak KAM solutions, Hamilton-Jacobi equation}

\begin{abstract}
We study properties of action-minimizing invariant sets for Tonelli Lagrangian and Hamiltonian systems and weak KAM solutions to the Hamilton-Jacobi equation in terms of Mather's averaging functions. Our principal discovery is that exposed points and extreme points of Mather's alpha function are closely related to disjoint properties and graph properties of the action-minimizing invariant sets, which is also related to $C^0$ integrability of the systems and the existence of smooth weak KAM solutions to the Hamilton-Jacobi equation.
\end{abstract}
\maketitle


\section{Introduction}\label{intro}
	For Tonelli Lagrangian and Tonelli Hamiltonian systems,
	Mather \cite{M91} and Ma\~{n}\'e \cite{M96} developed a theory of action-minimizing sets.
	Their studies were connected with weak KAM theory \cite{F08},
	which is a theory of viscosity solutions to the Hamilton-Jacobi equation,
	and both theories are still being actively studied.
	It is well-know that Mather's alpha and beta functions, 
	which are the minimum of the averaging action
	with cohomological and homological parameters respectively, play central roles in the theory of the action-minimizing sets.
	The relationship between properties of these functions and dynamical system properties
	is an interesting problem.
	For example, many researchers paid attention to the differentiability of beta function
	and one of the remarkable results due to Massart and Sorrentino \cite{M11} is that the regularity of beta function implies the $C^0$ integrability of the system when the base manifold is 2-dimensional torus.
	However, such problem is still far from being fully understood.
	Moreover, the relationship between these functions and solutions of the Hamilton-Jacobi equation
	is also an interesting question.

	In this paper, we investigate the properties of the action-minimizing sets such as Mather/Aubry/Ma\~{n}\'e sets and weak KAM solutions of the Hamilton-Jacobi equation in terms of Mather's averaging functions.
	In particular, some convexities such as exposed points and extreme points of Mather's alpha function characterize disjoint properties and graph properties of these invariant sets.
	Moreover, we discuss $C^0$ integrability of the systems and propose some related questions.
	We also discuss some relationships between these results and the existence of smooth weak KAM solutions to the Hamilton-Jacobi equation.
	
	Let $M$ be a compact and connected smooth $n$-dimensional Riemannian manifold without boundary. Let $L$ be a Tonelli Lagrangian and $H$ be the corresponding Tonelli Hamiltonian.
	We denote by $\M_c/ \A_c/ \N_c$ the Mather/Aubry/Ma\~{n}\'e sets for $c\in H^{1}(M;\Rset)$ respectively.
	For a homology class $h\in H_{1}(M;\Rset)$, $\M_{h}$ stands for the homological Mather set for $h$.
	Let $\beta:H_1(M;\Rset)\to\Rset$ be Mather's beta function and $\alpha:H^1(M;\Rset)\to\Rset$ be Mather's alpha function.  See Section~\ref{pre} for the precise definitions and details.
	We recall that a point $c\in H^1(M;\Rset)$ is said to be an exposed point of $\alpha$
	if for any $c'\in H^1(M;\Rset)\setminus\{c\}$ the function $\alpha$ is not affine on the segment joining $c$ and $c'$.
	A point $c\in H^1(M;\Rset)$ is said to be an extreme point of $\alpha$
	if for any segment $S\subset H^1(M;\Rset)$ containing $c$ in its interior the function $\alpha$ is not affine on $S$. Note that exposed points are extreme points but the converse is not true in general.
	Moreover, the function $\alpha$ is said to be strictly convex if each $c\in H^1(M;\Rset)$ is an extreme point of $\alpha$, and this condition is equivalent to that each $c\in H^1(M;\Rset)$ is an exposed point of $\alpha$.
	We now state our first main result.
	\begin{thm}\label{thm:exposed}
		For given $c\in H^1(M;\Rset)$, the following conditions are mutually equivalent:
		\begin{itemize}
			\item[(i)]	$\beta$ is differentiable at $h$ for all $h\in \partial\alpha(c)$.
			\item[(ii)]	$c$ is an exposed point of $\alpha$.
			\item[(iii)]	$\M_{c}\cap \M_{c'}=\emptyset$ holds for any $c'\in H^1(M;\Rset)\setminus\{c\}$.
			\item[(iv)]	$\A_{c}\cap \A_{c'}=\emptyset$ holds for any $c'\in H^1(M;\Rset)\setminus\{c\}$.
			\item[(v)]	$\N_{c}\cap \N_{c'}=\emptyset$ holds for any $c'\in H^1(M;\Rset)\setminus\{c\}$.
		\end{itemize}
	\end{thm}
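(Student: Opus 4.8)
The plan is to close the cycle of implications $(\mathrm{ii})\Rightarrow(\mathrm{v})\Rightarrow(\mathrm{iv})\Rightarrow(\mathrm{iii})\Rightarrow(\mathrm{ii})$ and to treat $(\mathrm{i})\Leftrightarrow(\mathrm{ii})$ separately by convex analysis. The implications $(\mathrm{v})\Rightarrow(\mathrm{iv})\Rightarrow(\mathrm{iii})$ are immediate from the inclusions $\M_c\subseteq\A_c\subseteq\N_c$, valid for every cohomology class: disjointness of the larger invariant sets attached to $c$ and $c'$ forces disjointness of the smaller ones. So the genuine content is the two ``opposite'' implications $(\mathrm{iii})\Rightarrow(\mathrm{ii})$ and $(\mathrm{ii})\Rightarrow(\mathrm{v})$, which I would both prove by contraposition, feeding them through a single convex-analytic lemma.

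That lemma is: for $c\ne c'$, the function $\alpha$ is affine on the segment $[c,c']$ if and only if $\partial\alpha(c)\cap\partial\alpha(c')\ne\emptyset$. For ``if,'' a common subgradient $h\in\partial\alpha(c)\cap\partial\alpha(c')$ forces, by convexity together with the two subgradient inequalities at $c$ and $c'$, that $\alpha$ coincides with the affine map $c''\mapsto\langle c'',h\rangle-\beta(h)$ throughout $[c,c']$ (and in particular $h\in\partial\alpha(c'')$ there). For ``only if,'' pick $h$ in the nonempty subdifferential of $\alpha$ at an interior point $c_0=\lambda c+(1-\lambda)c'$ of the segment; the subgradient inequalities $\alpha(c)\ge\alpha(c_0)+\langle h,c-c_0\rangle$ and $\alpha(c')\ge\alpha(c_0)+\langle h,c'-c_0\rangle$, combined with the affineness $\alpha(c_0)=\lambda\alpha(c)+(1-\lambda)\alpha(c')$, force both to be equalities, so $h\in\partial\alpha(c)\cap\partial\alpha(c')$. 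Now $\beta$ is the Legendre--Fenchel conjugate of $\alpha$, with $h\in\partial\alpha(c)\Leftrightarrow c\in\partial\beta(h)$, and $\beta$ is differentiable at $h$ exactly when $\partial\beta(h)$ is a singleton, which for $h\in\partial\alpha(c)$ must then be $\{c\}$. Hence $c$ fails to be exposed iff there exist $c'\ne c$ and $h\in\partial\alpha(c)\cap\partial\alpha(c')$, iff there is $h\in\partial\alpha(c)$ at which $\beta$ is not differentiable; this is precisely $\neg(\mathrm{i})\Leftrightarrow\neg(\mathrm{ii})$.

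For $(\mathrm{iii})\Rightarrow(\mathrm{ii})$: if $c$ is not exposed, pick $c'\ne c$ with $\alpha$ affine on $[c,c']$, and let $h\in\partial\alpha(c)\cap\partial\alpha(c')$ from the lemma. Then $\M_h\ne\emptyset$, and every $h$-minimizing measure $\mu$ satisfies $\int(L-\eta_c)\,d\mu=\beta(h)-\langle c,h\rangle=-\alpha(c)$, so $\mu$ is $c$-minimizing; thus $\M_h\subseteq\M_c$ and symmetrically $\M_h\subseteq\M_{c'}$, whence $\M_c\cap\M_{c'}\supseteq\M_h\ne\emptyset$, i.e. $\neg(\mathrm{iii})$. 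For $(\mathrm{ii})\Rightarrow(\mathrm{v})$: suppose $(x,v)\in\N_c\cap\N_{c'}$ with $c'\ne c$. The orbit through $(x,v)$ projects to a curve that is simultaneously $c$-semistatic and $c'$-semistatic, so its $\omega$-limit set is a nonempty compact invariant set contained in both $\A_c$ and $\A_{c'}$; choosing an ergodic invariant probability measure $\nu$ carried by a minimal subset of it gives $\supp\nu\subseteq\A_c\cap\A_{c'}$. Since an invariant measure supported on the Aubry set $\A_c$ is automatically a $c$-minimizing (Mather) measure, and likewise for $c'$, the rotation vector $h_0:=\rho(\nu)$ lies in $\partial\alpha(c)\cap\partial\alpha(c')$; by the lemma $\alpha$ is affine on $[c,c']$, so $c$ is not exposed, i.e. $\neg(\mathrm{ii})$. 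Together with the trivial implications and $(\mathrm{i})\Leftrightarrow(\mathrm{ii})$, this closes all the equivalences.

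I expect the main obstacle to be the two dynamical inputs feeding $(\mathrm{ii})\Rightarrow(\mathrm{v})$: that the $\omega$-limit set of a semistatic curve is contained in the Aubry set, and that every invariant measure supported on the Aubry set is a Mather measure. Both are classical in the Ma\~n\'e/Contreras--Iturriaga--Fathi circle of ideas, but I would want to cite precise statements and to be careful with the bookkeeping that the \emph{same} projected orbit is semistatic for two distinct cohomology classes and that its rotation vector is then a common subgradient. The remaining ingredients---compactness in the space of invariant measures to extract $\nu$, the inclusions $\M_c\subseteq\A_c\subseteq\N_c$, and the identity $\M_h\subseteq\M_c$ for $h\in\partial\alpha(c)$ with the characterization $\rho(\mu)\in\partial\alpha(c)$ for $c$-minimizing $\mu$---are standard, and the convex-analytic lemma is elementary.
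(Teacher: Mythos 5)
Your proof is correct, and it reorganizes the argument in a way that differs from the paper at two points worth noting. The paper funnels (ii)$\Leftrightarrow$(iii)$\Leftrightarrow$(iv) through a separate proposition (its Proposition~\ref{prop:main}) that characterizes affineness of $\alpha$ on a segment via intersections of Mather and Aubry sets (including a direct argument with $c$-static curves), and it proves (i)$\Leftrightarrow$(ii) by passing through $\partial\alpha(c)=\rho(\m_c)$ and the inclusion $\M^h\subset\M_c\cap\M_{c'}$; your version isolates the purely convex-analytic equivalence ``$\alpha$ affine on $[c,c']$ $\Leftrightarrow$ $\partial\alpha(c)\cap\partial\alpha(c')\neq\emptyset$'' and derives (i)$\Leftrightarrow$(ii) from it with no dynamical input at all, which is cleaner and makes transparent that this part is pure conjugate duality. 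The chain $(\mathrm{v})\Rightarrow(\mathrm{iv})\Rightarrow(\mathrm{iii})$ via the inclusions $\M_c\subset\A_c\subset\N_c$ matches the paper's use of the trivial implications, and your $(\mathrm{iii})\Rightarrow(\mathrm{ii})$ via $\M^h\subset\M_c\cap\M_{c'}$ is exactly the paper's mechanism (Lemma~3.5 of \cite{FGS09}). The one place where you use a genuinely different classical input is $(\mathrm{ii})\Rightarrow(\mathrm{v})$: the paper takes an invariant measure supported on the nonempty compact invariant set $\N_c\cap\N_{c'}$ and invokes the characterization ``$\supp\mu\subset\N_c$ iff $\mu\in\m_c$'' (Sorrentino, Remark~4.1.27 and Proposition~4.1.28), whereas you pass to the $\omega$-limit set of a doubly semistatic orbit and use that $\omega$-limits of semistatic curves lie in the Aubry set, then apply Theorem~IV of \cite{CDI97} to the resulting measure. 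Both are standard facts from the Ma\~n\'e/Contreras--Delgado--Iturriaga circle; the paper's route is slightly shorter since it avoids the $\omega$-limit step, but yours works just as well provided the $\omega$-limit statement is cited precisely. (Minor notational point: the homological Mather set is written $\M^h$ in the paper, not $\M_h$.)
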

	
	Theorem~\ref{thm:exposed} gives a characterization of disjoint properties of $\M_c/ \A_c/ \N_c$ for fixed $c\in H^1(M;\Rset)$.
	Note that, by the superlinearity of $\alpha$, there are infinitely many exposed points of $\alpha$.
	 Therefore, such disjoint properties of $\M_c/ \A_c/ \N_c$ hold for infinitely many $c\in H^1(M;\Rset)$.
	We also remark that the condition (i) of Theorem~\ref{thm:exposed} appears in 
	the studies of asymptotically isolated invariant Lipschitz Lagrangian graph 
	(\cite{R14}, Theorem 1.1) and of weak integrability (\cite{BS12}, Theorem 1.1).
	We obtain the following corollary from Theorem~\ref{thm:exposed}.
	\begin{cor}\label{cor:C1-str}
		The following statements are mutually equivalent:
		\begin{itemize}
			\item[(i)]	$\beta$ is differentiable everywhere.
			\item[(ii)]	$\beta$ is $C^1$.
			\item[(iii)]	$\alpha$ is not constant on any segment.
			\item[(iv)] $\alpha$ is strictly convex.
			\item[(v)]	For any $c\neq c'\in H^1(M;\Rset)$, $\M_{c}\cap \M_{c'}=\emptyset$.
			\item[(vi)]	For any $c\neq c'\in H^1(M;\Rset)$, $\A_{c}\cap \A_{c'}=\emptyset$.
			\item[(vii)]	For any $c\neq c'\in H^1(M;\Rset)$, 
					$\N_{c}\cap \N_{c'}=\emptyset$.
		\end{itemize}
	\end{cor}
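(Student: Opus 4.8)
The plan is to deduce the corollary from Theorem~\ref{thm:exposed} together with two classical facts: that a finite convex function on a finite-dimensional space which is differentiable at every point is automatically $C^1$, and the conjugate--subgradient relation $h\in\partial\alpha(c)\iff c\in\partial\beta(h)$ for the Legendre--Fenchel dual pair $(\alpha,\beta)$. First I would record that (i)$\iff$(ii): for convex $\beta$, differentiability everywhere upgrades to continuous differentiability by Rockafellar's theorem, and $C^1$ trivially implies differentiable everywhere.

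Next I would obtain (i)$\iff$(iv)$\iff$(v)$\iff$(vi)$\iff$(vii) by quantifying Theorem~\ref{thm:exposed} over all $c\in H^1(M;\Rset)$. The conjunction over all $c$ of condition (iii) [resp.\ (iv), (v)] of Theorem~\ref{thm:exposed} is verbatim condition (v) [resp.\ (vi), (vii)] here. The conjunction over all $c$ of condition (ii) of Theorem~\ref{thm:exposed} says that every $c$ is an exposed point of $\alpha$, which by the Introduction is exactly the strict convexity of $\alpha$, i.e.\ (iv). The conjunction over all $c$ of condition (i) of Theorem~\ref{thm:exposed} says that $\beta$ is differentiable at every $h\in\bigcup_{c}\partial\alpha(c)$; since $\beta$ is real-valued, hence continuous with nonempty subdifferential everywhere, the conjugate--subgradient relation gives $\bigcup_{c}\partial\alpha(c)=H_1(M;\Rset)$, so this is precisely (i).

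It remains to insert (iii) into the cycle. The implication (iv)$\Rightarrow$(iii) is immediate, since a function that is affine on no nondegenerate segment is in particular constant on no nondegenerate segment. For the converse I would show that $\alpha$ affine on a nondegenerate segment $T$ forces $\alpha$ constant on some nondegenerate segment. A short convex-analysis step (the subdifferential of $\alpha$ is constant along the relative interior of $T$ and each of its members is a subgradient that is tight along all of $T$) produces an $h_0$ with $T\subseteq\partial\beta(h_0)$ and $\alpha(c)=\langle c,h_0\rangle-\beta(h_0)$ for all $c\in\partial\beta(h_0)$. Now I bring in the dynamics: every $h_0$-action-minimizing measure is also $c$-action-minimizing for every $c\in\partial\beta(h_0)$, hence by Carneiro's theorem $\M_{h_0}$ lies in the intrinsic energy level $\{E=\alpha(c)\}$ for every such $c$; as $\M_{h_0}\neq\emptyset$ and distinct energy levels are disjoint, $\alpha$ is constant on $\partial\beta(h_0)\supseteq T$. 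This contradicts (iii), closing the chain of equivalences.

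The main obstacle is this last step: showing that for Mather's $\alpha$ function, affine on a segment and constant on a segment are equivalent notions. This fails for general superlinear convex functions (for instance $t\mapsto\max\{t,\,t^{2}-1\}$ on $\Rset$), so it genuinely uses the Tonelli structure --- precisely Carneiro's identification of the energy of Mather measures with the value of $\alpha$ --- to exclude a flat of $\alpha$ with nonzero slope. Everything else is bookkeeping on top of Theorem~\ref{thm:exposed} and standard Legendre--Fenchel duality.
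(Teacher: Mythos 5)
Your proof is correct and follows essentially the same route as the paper: (i)$\Leftrightarrow$(ii) by Rockafellar's theorem on convex functions, the cluster (i), (iv)--(vii) by quantifying Theorem~\ref{thm:exposed} over all $c$ after checking $\bigcup_{c}\partial\alpha(c)=H_1(M;\Rset)$, and (iii)$\Leftrightarrow$(iv) via the fact that a flat of $\alpha$ is forced to be a level set of $\alpha$. The only deviations are minor: the paper obtains the surjectivity of $\partial\alpha$ from Ma\~n\'e's identity $\bigcup_{c}\m_c=\bigcup_{h}\m^h$ together with $\partial\alpha(c)=\rho(\m_c)$ rather than from the nonemptiness of $\partial\beta(h)$ and conjugate duality, and it quotes Proposition~\ref{prop:main} for ``affine $\Rightarrow$ constant'' instead of re-deriving it through $\partial\beta(h_0)$ and the energy identity for Mather measures --- your re-derivation is precisely the (vi)$\Rightarrow$(i) argument already contained in that proposition.
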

	It is worthwhile to investigate additional properties of Mather's averaging functions.
	\begin{cor}\label{cor:C1-str-2}
		The following conditions are equivalent:
		\begin{itemize}
			\item[(i)] $\beta$ is differentiable everywhere and strictly convex.
			\item[(ii)]	$\alpha$ is differentiable everywhere and has no constant segment.
		\end{itemize}
		Moreover, in this case,
		\begin{itemize}
			\item[(1)]	$\alpha$ and $\beta$ are $C^1$ and strictly convex functions.
			\item[(2)]	$\nabla\alpha$ and $\nabla\beta$ are homeomorphisms
					which satisfy $\nabla \beta=(\nabla\alpha)^{-1}$.
			\item[(3)]	For each $h\in H_1(M;\Rset)$ and $c\in H^1(M;\Rset)$,
					$\M^h=\M_{\nabla\beta(h)}$ and $\M_c=\M^{\nabla\alpha(c)}$ hold.
			\item[(4)]	For $h\neq h'\in H_1(M;\Rset)$, $\M^h\cap\M^{h'}=\emptyset$.
					Similarly, properties (v),(vi) and (vii) in Corollary~\ref{cor:C1-str} hold.
		\end{itemize}
	\end{cor}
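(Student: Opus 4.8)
The plan is to deduce the whole statement from Legendre--Fenchel duality between $\alpha$ and $\beta$, combined with Corollary~\ref{cor:C1-str} and its counterpart obtained by exchanging the roles of $\alpha$ and $\beta$. The only genuinely dynamical input beyond those will be the standard description of Mather sets via rotation vectors.

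First I would record the relevant dictionaries. Corollary~\ref{cor:C1-str} gives that $\beta$ is differentiable everywhere if and only if $\beta\in C^1$, if and only if $\alpha$ has no constant segment, if and only if $\alpha$ is strictly convex. On the other hand, by Legendre--Fenchel duality for the mutually conjugate superlinear functions $\alpha,\beta$ --- concretely, Rockafellar's theorem that a finite superlinear convex function is essentially strictly convex precisely when its conjugate is essentially smooth --- one has the symmetric statement: $\alpha$ is differentiable everywhere (equivalently $\alpha\in C^1$) if and only if $\beta$ is strictly convex. Granting these, (i) says that $\beta$ is differentiable everywhere and strictly convex; the first clause becomes ``$\alpha$ has no constant segment'' by Corollary~\ref{cor:C1-str}, and the second becomes ``$\alpha$ differentiable everywhere'' by the symmetric statement, which together are exactly (ii); the converse is the same two equivalences read backwards. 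This proves that (i) and (ii) are equivalent, and it also gives (1): in this case $\alpha$ and $\beta$ are both differentiable everywhere, hence $C^1$ (a convex function differentiable on an open set is $C^1$), and both strictly convex.

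For (2), I would use that $\alpha$ is $C^1$, strictly convex and superlinear, so $\nabla\alpha\colon H^1(M;\Rset)\to H_1(M;\Rset)$ is continuous, injective by strict convexity, and surjective because for each $h$ the superlinear function $c\mapsto\alpha(c)-\langle h,c\rangle$ attains a global minimum, at which its gradient $\nabla\alpha(c)-h$ vanishes. The chain $h=\nabla\alpha(c)\iff h\in\partial\alpha(c)\iff c\in\partial\beta(h)\iff c=\nabla\beta(h)$ (the last step using differentiability of $\beta$) identifies $(\nabla\alpha)^{-1}$ with the continuous map $\nabla\beta$, so $\nabla\alpha$ is a homeomorphism with inverse $\nabla\beta$, and the claim for $\nabla\beta$ follows by symmetry. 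For (3) I would invoke the standard relation between cohomological and homological Mather sets, $\M_c=\bigcup_{h\in\partial\alpha(c)}\M^h$ (a minimizing measure is $c$-minimizing exactly when its rotation vector lies in $\partial\alpha(c)$ and it is homology-minimizing for that vector); since now $\partial\alpha(c)=\{\nabla\alpha(c)\}$ this reads $\M_c=\M^{\nabla\alpha(c)}$, and inserting $c=\nabla\beta(h)$ together with $\nabla\alpha\circ\nabla\beta=\id$ from (2) gives $\M^h=\M_{\nabla\beta(h)}$. Finally (4): since $\alpha$ is strictly convex, Corollary~\ref{cor:C1-str} supplies $\M_c\cap\M_{c'}=\emptyset$, $\A_c\cap\A_{c'}=\emptyset$ and $\N_c\cap\N_{c'}=\emptyset$ whenever $c\neq c'$, which is the assertion about (v), (vi), (vii); and for $h\neq h'$, injectivity of $\nabla\beta$ gives $\nabla\beta(h)\neq\nabla\beta(h')$, so by (3), $\M^h\cap\M^{h'}=\M_{\nabla\beta(h)}\cap\M_{\nabla\beta(h')}=\emptyset$.

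The steps that are not routine bookkeeping are the symmetric version of Corollary~\ref{cor:C1-str} used in the second paragraph --- above all the implication ``$\alpha$ differentiable everywhere $\Rightarrow$ $\beta$ strictly convex'', which is where convex duality genuinely enters and is what makes conditions (i) and (ii) symmetric under $\alpha\leftrightarrow\beta$ --- together with the identity $\M_c=\M^{\nabla\alpha(c)}$ underlying (3), which rests on the rotation-vector characterization of Mather sets rather than on convexity alone. I expect the former to be the main point to pin down carefully.
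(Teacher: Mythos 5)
Your proof is correct and follows essentially the same route as the paper: the equivalence (i)$\Leftrightarrow$(ii) via Proposition~\ref{prop:convex}~(iii)--(iv) and Corollary~\ref{cor:C1-str}, the identification $\nabla\beta=(\nabla\alpha)^{-1}$ through the subdifferential chain, and (3)--(4) from $\M_c=\bigcup_{h\in\partial\alpha(c)}\M^h$. The only (harmless) divergence is that you obtain surjectivity of $\nabla\alpha$ and $\nabla\beta$ by minimizing $c\mapsto\alpha(c)-\langle h,c\rangle$ using superlinearity, whereas the paper's Lemma~\ref{lem:map} derives it from the dynamical identity $\bigcup_c\m_c=\bigcup_h\m^h$; both are valid.
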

	\begin{que}\label{Q1}
		Is there a Tonelli Lagrangian which satisfies condition (i) and (ii) of  Corollary~\ref{cor:C1-str-2} other than the integrable Lagrangian $L_0=||v||^2/2$ ?
	\end{que}
	\begin{que}\label{Q2}
		Is there a Tonelli Lagrangian whose alpha function is strictly convex but not $C^1$ ?
	\end{que}
	We remark that Questions~\ref{Q1} and ~\ref{Q2} are closely related to the following questions proposed in \cite{A15} (the definition of $C^0$ integrability will be given later):
	\begin{itemize}
		\item[(Q1)] Does a $C^0$ integrable Tonelli Hamiltonian exist that is not $C^1$ integrable?
		\item[(Q2)] Can an invariant torus of a $C^0$ integrable Tonelli Hamiltonian flow carry two
		invariant measures that have not the same rotation number?
	\end{itemize}
	If the answer of Question~\ref{Q1} is positive then that of (Q1) is also positive in the case of 2-dimensional torus since $C^0$ integrablility of Tonelli Hamiltonian  in the such case is equivalent to $C^1$-regularity of beta function (see \cite{M11}) and it is also equivalent to strict convexity of alpha function by Corollary~\ref{cor:C1-str}. Moreover, if the answer of (Q2) is positive then that of Question~\ref{Q2} is also positive since  $C^1$-regularity of alpha function is equivalent to the condition that all the rotation vectors of invariant measures on an invariant torus of a $C^0$ integrable Tonelli Hamiltonian are the same (see Proposition~\ref{prop:rotation} in Section~\ref{pre}).	
	
	Next we turn to the study of $C^0$ integrability of Tonelli Hamiltonian systems.
	\begin{dfn}[$C^0$ integrability]\label{dfn:C0}
		A Tonelli Hamiltonian $H$ on $T^*M$ is said to be $C^0$ integrable
		if it admits a foliation of the phase space by disjoint Lipschitz invariant Lagrange graphs,
		one for each possible cohomology class in $H^1(M;\Rset)$.
	\end{dfn}
	According to \cite{A11}, the following question is well known from specialists:
	\begin{que}[\cite{A11}]\label{que:C0}
		When Mather's beta function is everywhere differentiable, is the Hamiltonian $C^0$ integrable?
	\end{que}
	\begin{rmk}\label{rmk:C0}
	A positive answer to this question in the case of 2-dimensional torus is given in \cite{M11}.
	\end{rmk}
	We will provide a new aspect of Question~\ref{que:C0} in the present paper.
	The following theorem plays a key role in our discussions:
	\begin{thm}\label{thm:ext-alpha}
		Let $c\in H^1(M;\Rset)$ and $\alpha$ be Mather's alpha function.
		\begin{itemize}
			\item[(i)] If $\pi(\A_c)=M$, then $c$ is an extreme point of $\alpha$.
			\item[(ii)] If $\pi(\M_c)=M$ and $\alpha$ is differentiable at $c$, then $c$ is an exposed point of $\alpha$.
		\end{itemize}
	\end{thm}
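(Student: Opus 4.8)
\emph{Overview and common reduction.} The plan is to use the hypothesis to force the relevant invariant set to be a single Lipschitz Lagrangian graph carrying \emph{only} action‑minimizing measures, and then to play the failure of extremality/exposedness against the rigidity of such graphs. Fix a smooth closed $1$‑form $\eta_c$ of class $c$. First I would record that $\pi(\A_c)=M$, via Mather's graph theorem together with the uniqueness of the $c$‑critical subsolution on the projected Aubry set (Fathi--Siconolfi), forces the existence of a $C^{1,1}$ function $u$ with $H(x,\eta_c(x)+\d u(x))\equiv\alpha(c)$ and $\A_c=G:=\{(x,\eta_c(x)+\d u(x)):x\in M\}$; and in case (ii), $\pi(\M_c)=M$ implies $\pi(\A_c)=M$, and since $\M_c\subseteq\A_c$ are then both Lipschitz graphs over $M$ they coincide, so again $\M_c=\A_c=G$. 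The crucial lemma I would then establish is: \emph{every invariant probability measure $\nu$ carried by $G$ is a $c$‑Mather measure, so that $\rho(\nu)\in\partial\alpha(c)$ and $\int L\,\d\nu=\langle c,\rho(\nu)\rangle-\alpha(c)$.} This follows from the calibration identity $\frac{\d}{\d t}u(x(t))=L(x(t),\dot x(t))+\alpha(c)-\eta_c(\dot x(t))$, valid along orbits in $G$ (Legendre duality plus $H\equiv\alpha(c)$): integrating against $\nu$ and using invariance kills the left side, giving $\int(L-\eta_c)\,\d\nu=-\alpha(c)$, the minimal value; that $\rho(\nu)\in\partial\alpha(c)$ is then routine from the defining inequality for $\alpha$.

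\emph{Part (i).} Suppose $c$ is not extreme, so $\alpha$ is affine on a segment $[c_1,c_2]$ with $c$ in its relative interior. An elementary convexity computation shows $\partial\alpha(c)\subseteq\partial\alpha(c_1)\cap\partial\alpha(c_2)$ and $\alpha(c_i)=\alpha(c)+\langle c_i-c,h\rangle$ for every $h\in\partial\alpha(c)$, $i=1,2$. Feeding this into the crucial lemma: for any invariant $\nu$ on $G$ one computes $\int(L-\eta_{c_1})\,\d\nu=\langle c-c_1,\rho(\nu)\rangle-\alpha(c)=(\alpha(c)-\alpha(c_1))-\alpha(c)=-\alpha(c_1)$, so $\nu$ is simultaneously a $c_1$‑ and a $c_2$‑Mather measure. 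I would then invoke the known behaviour of Aubry sets over the faces of $\alpha$ (Massart) — the Aubry set is constant on the relative interior of a segment on which $\alpha$ is affine and is contained in the Aubry set of every point of the corresponding face — to conclude $\A_{c_1}\supseteq\A_c=G$. Since $\A_{c_1}$ is a Lipschitz graph over its projection, that projection is all of $M$ and $\A_{c_1}=G$ as a subset of $T^*M$; but the two sides are graphs of closed $1$‑forms of classes $c_1$ and $c$, forcing $c_1=c$, a contradiction. Hence $c$ is extreme.

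\emph{Part (ii).} By (i), $c$ is already extreme; it remains to upgrade ``extreme'' to ``exposed'' using differentiability of $\alpha$ at $c$. Differentiability gives $\partial\alpha(c)=\{h_0\}$ with $h_0=\nabla\alpha(c)$, so by the crucial lemma every invariant measure $\nu$ on $G$ has $\rho(\nu)=h_0$ and hence $\int L\,\d\nu=\langle c,h_0\rangle-\alpha(c)=\beta(h_0)$ (using $c\in\partial\beta(h_0)$). Suppose $c$ is not exposed; since $\partial\alpha(c)=\{h_0\}$, this means $\partial\beta(h_0)\neq\{c\}$, so there is $c'\neq c$ with $h_0\in\partial\alpha(c')$, i.e. $\alpha(c')+\beta(h_0)=\langle c',h_0\rangle$. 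Then $\int(L-\eta_{c'})\,\d\nu=\beta(h_0)-\langle c',h_0\rangle=-\alpha(c')$ for every such $\nu$, so every invariant measure on $G$ — equivalently, since $\M_c=G$, every $c$‑Mather measure — is also a $c'$‑Mather measure. Therefore $\M_{c'}\supseteq\M_c=G$; as $\M_{c'}$ is a Lipschitz graph over its projection it must equal $G$, and then $G=\M_{c'}=\A_{c'}$ is the graph of a closed $1$‑form representing $c'$ while by construction it represents $c$, so $c=c'$, a contradiction. Hence $c$ is exposed.

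\emph{Expected main obstacle.} For (ii) the measure–transport argument is self‑contained modulo standard weak KAM facts, because there $\M_c=\A_c$, so moving the whole family of $c$‑Mather measures recovers the entire graph. For (i) the delicate point is precisely the inclusion $\A_{c_1}\supseteq\A_c$: the Aubry set is in general strictly larger than the closure of the union of supports of its Mather measures — already the pendulum shows this at the endpoints of its flat interval — so transporting measures only yields $\M_{c_1}\supseteq\M_c$, which is not enough. One genuinely needs the monotonicity/constancy of Aubry sets along flats, and pinning that statement down in exactly the form required (or replacing it by a measure‑free argument) is the part I expect to demand the most care.
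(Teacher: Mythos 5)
Your proof is correct and follows essentially the same route as the paper's: in both cases the hypothesis forces $\A_c$ (resp.\ $\M_c=\A_c$) to be a full Lipschitz Lagrangian graph of class $c$, a flat of $\alpha$ through $c$ (resp.\ a second element $c'\in\partial\beta(h_0)$) transports that graph into $\A_{c_1}$ (resp.\ $\M_{c'}$), and the coincidence of two full graphs of closed $1$-forms forces equality of cohomology classes, with the final passage to ``exposed'' via Theorem~\ref{thm:exposed}; your calibration computation in (ii) is just a hands-on version of the paper's use of $\M^{h^*}\subset\M_{c'}$. The one caveat is the inclusion $\A_c\subseteq\A_{c_1}$ along a flat, which you attribute to Massart: according to the paper's own remark after Proposition~\ref{prop:main}, Massart's Proposition~6 only yields the projected version $\pi(\A_{c_t})\subseteq\pi(\A_{c_0})\cap\pi(\A_{c_1})$ --- which, as you yourself observe, would not suffice here --- and the unprojected inclusion is precisely the implication (ii)$\Rightarrow$(v) of Proposition~\ref{prop:main}, so that step requires the argument given there (via the definition of $c$-static curves and the inequality \eqref{eqn:mane}) rather than a citation.
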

	\begin{rmk}\label{rmk:pend}\ 
		\begin{itemize}
		\item[(1)] Note that $\pi(\A_c)=M$ holds if and only if the Hamilton-Jacobi equation $H(x, c+du)=\alpha(c)$ has a unique $C^1$ negative (resp. positive) weak KAM solution up to constants for $c=[\eta] \in H^1(M;\Rset)$ (see Proposition~\ref{prop:HJ} in Section~\ref{pre}).
		\item[(2)] There is an example which satisfies $\pi(\A_c)=M$
		and $c$ is a differentiable but not exposed point of Mather's alpha function
		(see Section~\ref{subsec:1-degree}).
		\end{itemize}
	\end{rmk}
	We state our result on $C^0$ integrability.
	\begin{thm}\label{thm:C0}
		We have:
		\begin{itemize}
			\item[(i)] If $\dim H^1(M;\Rset)\neq \dim M$, then the Hamiltonian $H$ cannot be $C^0$-integrable.
			\item[(ii)] If $\dim H^1(M;\Rset)= \dim M$, the followings are equivalent:
				\begin{itemize}
					\item[(1)]	The Hamiltonian $H$ is $C^0$-integrable.
					\item[(2)]	$\pi(\A_c)=M$ for all $c\in H^1(M;\Rset)$.
				\end{itemize}
		\end{itemize}
	\end{thm}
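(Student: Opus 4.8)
The plan is to move back and forth between three descriptions of the same objects: the leaves of the putative foliation, backward weak KAM solutions, and Aubry sets. Throughout I use the following standard facts from weak KAM theory (see \cite{F08}, \cite{M91}). A Lipschitz invariant Lagrangian graph of class $c$ is $\mathrm{graph}(c+du_c)$ for some $u_c\in C^{1,1}(M)$; since along a Hamiltonian orbit one has $\dot q=\partial_pH$, the Fenchel inequality $\langle p,v\rangle\le L(x,v)+H(x,p)$ becomes an equality along orbits lying on such a graph, so $u_c$ is simultaneously a forward and a backward weak KAM solution, $H(x,c+du_c)=\alpha(c)$ on $M$, and hence $\A_c\subseteq\mathrm{graph}(c+du_c)$; more generally $\A_c\subseteq\mathrm{graph}(c+du^-)$ for every backward weak KAM solution $u^-$. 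I also use that the $\alpha$-limit set of a backward-calibrated curve lies in $\A_c$, that $\A_c$ is a compact Lipschitz graph over $\pi(\A_c)$, that $c\mapsto\A_c$ is upper semicontinuous, and that superlinearity of $\alpha$ makes each sublevel $\{H\le E\}$ compact. \emph{Step 1: if $H$ is $C^0$-integrable then $\pi(\A_c)=M$ for all $c$, and each leaf equals $\A_c$.} Let $\{\Lambda_c\}$ be the foliation, fix $c$, and write $\Lambda_c=\mathrm{graph}(c+du_c)$. If some backward weak KAM solution $\tilde u$ for $c$ had $\tilde u-u_c\not\equiv\mathrm{const}$ (equivalently $d\tilde u\neq du_c$ on a set of positive measure, since a Lipschitz function with a.e.\ vanishing derivative is constant), pick a common differentiability point $x_1$ with $d_{x_1}\tilde u\neq d_{x_1}u_c$. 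Then $q_1:=(x_1,c+d_{x_1}\tilde u)\notin\Lambda_c$, so $q_1$ lies on a leaf $\Lambda_{c_1}$ with $c_1\neq c$, and from $H(x_1,c+d_{x_1}\tilde u)=\alpha(c)$ and $q_1\in\Lambda_{c_1}\subseteq\{H=\alpha(c_1)\}$ we get $\alpha(c_1)=\alpha(c)$. The backward orbit through $q_1$ is a backward-calibrated curve for $\tilde u$, it stays in the compact invariant set $\Lambda_{c_1}$, so its $\alpha$-limit set $A$ is non-empty, yet $A\subseteq\A_c\cap\Lambda_{c_1}\subseteq\Lambda_c\cap\Lambda_{c_1}=\emptyset$ — a contradiction. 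Hence the backward weak KAM solution for $c$ is unique up to constants and is $C^1$, so $\pi(\A_c)=M$ by Remark~\ref{rmk:pend}(1) (Proposition~\ref{prop:HJ}), and then $\A_c=\mathrm{graph}(c+du_c)=\Lambda_c$. Note this already gives (ii)(1)$\Rightarrow$(ii)(2), and it does not use $\dim H^1(M;\Rset)=\dim M$.

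\emph{Step 2: proof of (i).} Suppose $H$ is $C^0$-integrable; by Step 1 the leaves are exactly the Aubry sets $\A_c$, they are pairwise disjoint, and their union is $T^*M$. Fix $x_0\in M$. Since $\A_c$ is a graph over $M$, it meets the fibre $T^*_{x_0}M$ in a single point $g_{x_0}(c)$, and disjointness plus covering of $T^*M$ make $g_{x_0}\colon H^1(M;\Rset)\to T^*_{x_0}M$ a bijection. For $c$ in a bounded set, $\A_c\subseteq\{H\le\max\alpha\}$ lies in a fixed compact set, so $g_{x_0}$ is bounded on bounded sets, and together with upper semicontinuity of $c\mapsto\A_c$ (every subsequential limit of $g_{x_0}(c_n)$, $c_n\to c$, must lie in $\A_c\cap T^*_{x_0}M=\{g_{x_0}(c)\}$) this yields continuity of $g_{x_0}$. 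A continuous bijection between the Euclidean spaces $H^1(M;\Rset)$ and $T^*_{x_0}M$ forces them to have equal dimension — a Baire-category argument produces a closed ball whose image has non-empty interior and is homeomorphic to it, giving $\dim M\le\dim H^1(M;\Rset)$, and the embedded ball has covering dimension $\dim M$, giving the reverse. Hence $\dim H^1(M;\Rset)=\dim M$; contrapositively, if $\dim H^1(M;\Rset)\neq\dim M$ then $H$ is not $C^0$-integrable, which is (i).

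\emph{Step 3: proof of (ii)(2)$\Rightarrow$(ii)(1).} Assume $\dim H^1(M;\Rset)=\dim M=:n$ and $\pi(\A_c)=M$ for all $c$. By Remark~\ref{rmk:pend}(1), each $\A_c$ equals $\mathrm{graph}(c+du_c)$ for the unique weak KAM solution $u_c\in C^{1,1}$, hence is a Lipschitz invariant Lagrangian graph of class $c$. By Theorem~\ref{thm:ext-alpha}(i) every $c$ is an extreme point of $\alpha$, so $\alpha$ is strictly convex, and by Corollary~\ref{cor:C1-str} the sets $\A_c$ are pairwise disjoint. It remains to see that they cover $T^*M$. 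Fix $x_0$; as in Step 2, $g_{x_0}(c)=\A_c\cap T^*_{x_0}M$ is a continuous injection $\Rset^n\cong H^1(M;\Rset)\to T^*_{x_0}M\cong\Rset^n$, hence open by invariance of domain, so its image is open; it is also closed, since $g_{x_0}(c_k)\to q$ forces $\alpha(c_k)=H(x_0,g_{x_0}(c_k))$ to stay bounded, hence $c_k$ bounded by superlinearity of $\alpha$, hence (a subsequence and continuity) $q=g_{x_0}(c)$ for some $c$. Being non-empty, open and closed in the connected space $T^*_{x_0}M$, the image is all of $T^*_{x_0}M$; thus $\bigcup_c\A_c=T^*M$. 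Therefore $\{\A_c\}_{c\in H^1(M;\Rset)}$ is a foliation of $T^*M$ by disjoint Lipschitz invariant Lagrangian graphs, one per cohomology class, i.e.\ $H$ is $C^0$-integrable.

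The main obstacle is Step 1: the trapping argument must simultaneously exploit invariance of the neighbouring leaf $\Lambda_{c_1}$ and calibration of the orbit for $\tilde u$, and it rests on the non-trivial input that $\alpha$-limit sets of backward-calibrated curves land in the Aubry set. I also expect that verifying carefully that a Lipschitz invariant Lagrangian graph has critical value exactly $\alpha(c)$ and that its orbits are calibrated in both time directions — the place where convexity of $H$ genuinely enters, through the equality case $v=\partial_pH(x,p)$ of the Fenchel inequality — deserves to be isolated as a clean lemma. The remaining ingredients are soft: upper semicontinuity of the Aubry set, superlinearity of $\alpha$, and the two topological facts (invariance of domain and Baire category).
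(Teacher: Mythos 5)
Your proof is correct, and while it shares the paper's overall architecture --- the fibre map $c\mapsto \A_c^*\cap T_{x_0}^*M$, strict convexity of $\alpha$ via Theorem~\ref{thm:ext-alpha}(i) and disjointness via Corollary~\ref{cor:C1-str}, and a topological dimension count --- it replaces the paper's two external inputs with direct arguments. For (ii)(1)$\Rightarrow$(ii)(2) the paper notes that each leaf lies in $\N^*_c$, so the tiered Ma\~{n}\'e set is all of $T^*M$, and then quotes Arnaud's theorem from \cite{A11} to get $\A^*_c=\N^*_c=\Lambda_c$; your Step~1 instead proves uniqueness of the backward weak KAM solution directly, by trapping the backward calibrated orbit of a putative second solution inside a neighbouring leaf and contradicting the theorem that $\alpha$-limit sets of backward calibrated rays lie in the Aubry set. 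That is a genuinely self-contained alternative (essentially the mechanism underlying Arnaud's result), but that convergence theorem is the load-bearing input and should be cited precisely. For part (i) the paper shows $F_x$ is proper (via $H(\Lambda_c)=\alpha(c)$ and superlinearity of $\alpha$), hence a closed continuous bijection and so a homeomorphism; your Baire-category plus invariance-of-domain argument reaches the same conclusion from the same ingredients. Two cautions. First, the paper takes continuity of the fibre map as part of the meaning of ``foliation,'' whereas you derive it from upper semicontinuity of $c\mapsto\A^*_c$; semicontinuity of the Aubry set in the cohomology class is a delicate point in general, so you should route this through the Ma\~{n}\'e set, which is genuinely upper semicontinuous in $c$ and coincides with $\A^*_c$ in your setting because the weak KAM solution is unique. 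Second, the surjectivity of $g_{x_0}$ in Step~3, which the paper defers to Proposition~5 of \cite{A15}, is exactly your open-and-closed argument, so there you are reproducing the cited source rather than diverging from it.
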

	From Corollary~\ref{cor:C1-str} and Theorem~\ref{thm:C0}, assuming $\dim H^{1}(M;\Rset)=\dim M$, we see that Question~\ref{que:C0}
	is equivalent to the following Question~\ref{que:HJ}.
	\begin{que}\label{que:HJ}
		When Mather's alpha function is strictly convex, does $\pi(\A_c)=M$ holds for all $c\in H^1(M;\Rset)$?
	\end{que}
	\begin{rmk}\label{rmk:HJ}\ 
	\begin{itemize}
		\item[(1)] From Remark~\ref{rmk:C0} and the equivalence of Questions~\ref{que:C0} and ~\ref{que:HJ}, the answer of Question~\ref{que:HJ} in the case of 2-dimensional torus is positive.
		\item[(2)] The answer of the converse of Question~\ref{que:HJ} is positive by Theorem~\ref{thm:ext-alpha}.
	\end{itemize}
	\end{rmk}
	Moreover, Theorem~\ref{thm:ext-alpha} and Remark~\ref{rmk:pend} (1) lead to the following questions:
	\begin{que}\label{que:HJ2}
		When $c=[\eta] \in H^1(M;\Rset)$ is an extreme or exposed point of Mather's alpha function $\alpha$,
		does the Hamilton-Jacobi equation $H(x, \eta+d_x u)=\alpha(c)$ have a classical solution?
		Moreover, does such convexity of $\alpha$ at $c$ determine regularities of the weak KAM solutions?
	\end{que}
	In Section~\ref{sec:examples}, we investigate two examples: single-degree-of-freedom mechanical Hamiltonians and KAM tori. In these examples, we see that an extreme (resp. exposed) point of Mather's alpha function corresponds to the condition that the projected Aubry (resp. Mather) set is the whole base manifold, which also corresponds to the existence of $C^1$ (resp. $C^2$) solutions to the Hamilton-Jacobi equation. We remark again that Mather's alpha function has  infinitely many exposed (thus extreme) points by its superlinearity.

\section{Prerequisites on action minimizing invariant sets}\label{pre}
In this section, we provide prerequisites on Aubry-Mather theory for Tonelli Lagrangians and Hamiltonians. See \cite{M91,M96,F08,S15} for more details. As stated in Section~\ref{intro}, let $M$ be a compact and connected smooth $n$-dimensional Riemannian manifold without boundary.
We denote a point of the tangent bundle $TM$ and the cotangent bundle $T^*M$ respectively by $(x,v)$ and $(x,p)$ with $v\in T_xM$ and $p\in T_x^*M$ for $x\in M$.
Recall that a \emph{Tonelli Lagrangian} $L:TM\to\Rset$ is a $C^2$ function which is strictly $C^2$-convex and superlinear in the fiber, i.e., $\partial^2 L/\partial v^2$ is positive definite and $\lim_{||v||\to +\infty} L(x,v)/||v||=+\infty$.
The Euler-Lagrange flow of the Tonelli Lagrangian $L$ is denoted by $\Phi_t^L$.
The corresponding Hamiltonian (the Legendre transformation of $L$), called \emph{Tonelli Hamiltonian}, is denoted by $H$, which has the same regularity as $L$ and also satisfies strictly $C^2$-convexity and superlinearity in the fiber.

\subsection{Action-minimizing measures and the Mather sets}
Let $\m(L)$ be the set of compactly supported $\Phi_t^L$-invariant probability measures on $TM$.
We consider the average action $A_L: \m(L)\to \Rset$ given by
\begin{align*}
	A_L(\mu)=\int_{TM} L \ d\mu.
\end{align*}
For a closed 1-form $\eta$ on $M$, it is well-known that a modified Tonelli Lagrangian given by $L_{\eta}(x,v)=L(x,v)-\langle\eta(x),v\rangle$ has the same Euler-Lagrange flow as $L$, but may have different average action when $\eta$ is not exact.
This leads to the definition of \emph{Mather's alpha function}
\begin{align*}
	\alpha(c)=-\min_{\mu\in\m(L)} \int L_\eta \ d\mu,\quad c=[\eta]\in H^1(M;\Rset).
\end{align*}
There exists a surjective map $\rho:\m(L)\to H_1(M;\Rset)$ given by
\begin{align*}
	\int_{TM} \langle\eta(x),v\rangle\ d\mu=\langle c,\rho(\mu)\rangle, \quad \forall c=[\eta]\in H^1(M;\Rset),
\end{align*}
which determines homological positions in $M$ of the supports of the invariant probability measures
and the image $\rho(\mu)$ is called the \emph{rotation vector} of $\mu$.
We consider the minimum average action under the constraint on the rotation vector and the minimum value of such problem is given by
\[
	\beta(h)=\min_{\mu\in{\rho^{-1}(h)}} \int L \ d\mu,
\]
which is known as \emph{Mather's beta function}. 
Both of $\alpha$ and $\beta$ are convex conjugate, i.e., they are convex functions such that
\[
	\alpha(c)=\sup_{h\in H_1(M;\Rset)} \{\langle c,h\rangle -\beta(h)\},\quad
	\beta(h)=\sup_{c\in H^1(M;\Rset)} \{\langle c,h\rangle -\alpha(c)\}.
\]
Note that $\alpha$ and $\beta$ have superlinear growth.
Since $\alpha$ and $\beta$ are convex, we can define the subdifferentials of these functions.
For $c\in H^1(M;\Rset)$, the \emph{subdifferential} of the convex function $\alpha$ at $c$, which is denoted by $\partial\alpha(c)$, is the set of points $h\in H_1(M;\Rset)$ such that
\begin{align*}
	\alpha(c')-\alpha(c)\ge \langle c-c', h\rangle,\quad \forall c'\in H^1(M;\Rset).
\end{align*}
Similarly, for $h\in H_1(M;\Rset)$ the subdifferential $\partial\beta(h)$ of the convex function $\beta$ at $h$ is the set of points $c\in H^1(M;\Rset)$ such that
\begin{align*}
	\beta(h')-\beta(h)\ge \langle c, h'-h\rangle,\quad \forall h'\in H_1(M;\Rset).
\end{align*}
It is well-known that
\[
h\in\partial\alpha(c) \quad\Leftrightarrow\quad
c\in\partial\beta(h) \quad\Leftrightarrow\quad
\alpha(c)+\beta(h)=\langle c,h\rangle
\]
for $c\in H^1(M;\Rset)$ and $h\in H_1(M;\Rset)$ (see Theorem 23.5 in \cite{R70}).
From the general results of convex analysis, we can easily obtain the following.
\begin{prop}\label{prop:convex}\
		\begin{itemize}
			\item[(i)]	$\beta$ and $\alpha$ are locally Lipschitz continuous, 
					thus differentiable almost everywhere.
			\item[(ii)]	For each $h\in H_1(M;\Rset)$ and $c\in H^1(M;\Rset)$,
					$\partial \beta(h)$ and $\partial \alpha(c)$ are nonempty, compact and convex sets.
			\item[(iii)]	If $\beta$ (resp. $\alpha$) is differentiable on some open convex set $U$,
					then $\beta$ (resp. $\alpha$) is $C^1$ on $U$.
			\item[(iv)]	$\beta$ (resp. $\alpha$) is strictly convex if and only if
					$\alpha$ (resp. $\beta$) is $C^1$.
			\item[(v)]	$\partial\beta$ (resp. $\partial\alpha$) is a one-to-one mapping if and only if $\beta$ (resp. $\alpha$) is strictly convex and differentiable everywhere.
		\end{itemize}
	\end{prop}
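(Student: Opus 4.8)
The plan is to read off every item from the standard theory of finite-valued convex functions on finite-dimensional spaces (as in Rockafellar \cite{R70}), using throughout that $\alpha$ and $\beta$ are finite on all of $H^1(M;\Rset)$ and $H_1(M;\Rset)$, that they are mutually convex conjugate, and that they have superlinear growth; the superlinearity is the only hypothesis one must keep an eye on, since it is what makes the Legendre--Fenchel duality transfer regularity back and forth.

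First I would treat the "one variable at a time" statements (i), (ii), (iii). For (i): a convex function that is finite on a neighborhood of a point is Lipschitz on a smaller neighborhood (Theorem~10.4 in \cite{R70}); since $\alpha$ and $\beta$ are everywhere finite they are locally Lipschitz, and Rademacher's theorem then gives differentiability almost everywhere. For (ii): at any point the subdifferential of such a function is nonempty (a supporting affine function exists at an interior point of the domain), closed and convex (it is an intersection of closed half-spaces), and bounded because any element of $\partial\alpha(c)$ has norm at most a local Lipschitz constant of $\alpha$ near $c$ by (i); hence $\partial\alpha(c)$ and $\partial\beta(h)$ are nonempty, compact and convex (Theorem~23.4 in \cite{R70}). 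For (iii): on an open set where a convex function is differentiable, the gradient map is automatically continuous (Theorem~25.5 in \cite{R70}), so differentiability on the open convex set $U$ is the same as being $C^1$ on $U$.

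Next I would do (iv) and (v), which are the genuinely dual statements. Since $\alpha=\beta^*$ and $\beta=\alpha^*$ are conjugate finite convex functions and both have superlinear growth (so each conjugate is finite everywhere, i.e. each is co-finite), the duality between essential smoothness and essential strict convexity applies in its simplest form: $\beta$ is strictly convex on $H_1(M;\Rset)$ if and only if $\alpha$ is differentiable at every point of $H^1(M;\Rset)$ (Theorem~26.3 in \cite{R70}), and by (iii) the latter is exactly "$\alpha$ is $C^1$"; exchanging the roles of $\alpha$ and $\beta$ gives the other half of (iv). For (v), the set-valued map $\partial\beta$ is single-valued precisely at the points of differentiability of $\beta$, so "$\partial\beta$ everywhere single-valued" $\Leftrightarrow$ "$\beta$ differentiable everywhere"; and using the correspondence $c\in\partial\beta(h)\Leftrightarrow h\in\partial\alpha(c)$ recalled before Proposition~\ref{prop:convex}, the map $\partial\beta$ is injective (i.e. $\partial\beta(h)\cap\partial\beta(h')=\emptyset$ for $h\neq h'$) if and only if no $c$ lies in two distinct subdifferentials of $\alpha$, i.e. $\partial\alpha$ is single-valued everywhere, i.e. $\alpha$ is differentiable everywhere, which by (iv) is equivalent to $\beta$ being strictly convex. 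Combining, $\partial\beta$ is one-to-one iff $\beta$ is differentiable everywhere and strictly convex, and symmetrically for $\partial\alpha$.

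I do not expect a real obstacle here; the statement is essentially a dictionary entry. The one place deserving attention is (iv)--(v), where one should verify that the co-finiteness/superlinear-growth hypothesis of Theorem~26.3 in \cite{R70} is satisfied — this is exactly the superlinear growth of $\alpha$ and $\beta$ already noted — after which (v) is a short bookkeeping argument with the subdifferential correspondence.
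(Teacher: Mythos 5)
Your proposal is correct and follows essentially the same route as the paper, which simply cites the relevant theorems of Rockafellar's \emph{Convex Analysis} (local Lipschitz continuity plus Rademacher for (i), properties of subdifferentials for (ii)--(iii), and the smoothness/strict-convexity duality for conjugate co-finite convex functions for (iv)--(v)). Your fleshed-out derivation of (v) from the subdifferential correspondence and your attention to superlinearity as the hypothesis making Theorem~26.3 applicable are exactly the points the paper leaves implicit.
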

	\begin{proof}
	By Theorem 10.6 in \cite{R70} and Rademacher's theorem, we obtain (i).
	Using Theorem 24.7, 25.5.1, 26.3, 26.3.1 in \cite{R70}, we also have (ii), (iii), (iv) and (v).
	Note that $\alpha$ and $\beta$ are finite and $\alpha^*=\beta$.
	\end{proof}

Let $\m_c$ (resp. $\m^h$) be the set of invariant probability measures which attain the minimum average action of $L_\eta$ (resp. $L$) in $\m(L)$ (resp. $\rho^{-1}(h)$) for $c=[\eta]\in H^1(M;\Rset)$ (resp. $h\in H_1(M;\Rset)$):
\begin{align*}
	\m_c=\{\mu\in \m(L); A_{L_\eta}(\mu)=-\alpha(c)\}, \quad
	\m^h=\{\mu\in \rho^{-1}(h); A_{L}(\mu)=\beta(h)\}.
\end{align*}
We remark that $\m_c$ and $\m^h$ are not empty for each $c\in H^1(M;\Rset)$ and $h\in H_1(M;\Rset)$.
\begin{prop}\label{prop:rotation}
		For each $c=[\eta] \in H^1(M;\Rset)$, we have
		\begin{align}
			\partial \alpha (c)= \rho(\m_c).
		\end{align}
		In particular, $\alpha$ is differentiable at $c$
					if and only if all rotation vectors of $\m_c$ are the same.
	\end{prop}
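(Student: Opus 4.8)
The plan is to derive the identity purely from the Fenchel duality between $\alpha$ and $\beta$, together with the elementary observation that, for an invariant measure, the $L_\eta$-action and the $L$-action differ exactly by the pairing of $c$ with the rotation vector. First I would record that for every $\mu\in\m(L)$ and every $c=[\eta]\in H^1(M;\Rset)$,
\[
	A_{L_\eta}(\mu)=\int L_\eta\,d\mu=\int L\,d\mu-\int\langle\eta(x),v\rangle\,d\mu=A_L(\mu)-\langle c,\rho(\mu)\rangle,
\]
which is immediate from the definitions of $L_\eta$ and of $\rho(\mu)$; in particular the right-hand side depends only on the class $c$. Taking the infimum over $\mu\in\m(L)$ recovers $\alpha(c)=\sup_{\mu}\{\langle c,\rho(\mu)\rangle-A_L(\mu)\}$, in agreement with $\alpha=\beta^{*}$.

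Then I would establish the two inclusions, each using the non-emptiness of the relevant minimizing sets recalled above. For $\rho(\m_c)\subseteq\partial\alpha(c)$: take $\mu\in\m_c$ and set $h:=\rho(\mu)$; the displayed identity gives $A_L(\mu)=\langle c,h\rangle-\alpha(c)$, and since $\mu\in\rho^{-1}(h)$ we get $\beta(h)\le A_L(\mu)=\langle c,h\rangle-\alpha(c)$, i.e.\ $\alpha(c)+\beta(h)\le\langle c,h\rangle$; the reverse inequality is the Fenchel--Young inequality, so $\alpha(c)+\beta(h)=\langle c,h\rangle$, which by the duality criterion recalled above means $h\in\partial\alpha(c)$. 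For $\partial\alpha(c)\subseteq\rho(\m_c)$: take $h\in\partial\alpha(c)$ and pick any $\mu\in\m^h$; then $\rho(\mu)=h$ and $A_L(\mu)=\beta(h)$, so the displayed identity yields $A_{L_\eta}(\mu)=\beta(h)-\langle c,h\rangle=-\alpha(c)$, the last step using $h\in\partial\alpha(c)$; hence $\mu\in\m_c$ and $h=\rho(\mu)\in\rho(\m_c)$.

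Finally, for the last assertion, since $\alpha$ is a finite convex function on $H^1(M;\Rset)$ it is differentiable at $c$ if and only if $\partial\alpha(c)$ is a singleton (Theorem 25.1 in \cite{R70}), and combining this with $\partial\alpha(c)=\rho(\m_c)$ gives the equivalence with the statement that all rotation vectors of measures in $\m_c$ coincide. I do not expect a genuine obstacle here: the only points needing care are the non-emptiness of $\m_c$ and $\m^h$ (already recorded) and the fact that $\int\langle\eta(x),v\rangle\,d\mu$ depends only on $[\eta]$ (built into the definition of $\rho$), so the whole argument is essentially bookkeeping in convex duality.
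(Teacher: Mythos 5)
Your proposal is correct and follows essentially the same route as the paper's proof: both inclusions are obtained from the identity $A_{L_\eta}(\mu)=A_L(\mu)-\langle c,\rho(\mu)\rangle$, the nonemptiness of $\m_c$ and $\m^h$, and the conjugacy criterion $\alpha(c)+\beta(h)=\langle c,h\rangle\Leftrightarrow h\in\partial\alpha(c)$. The only cosmetic difference is that you invoke the Fenchel--Young inequality for the reverse bound where the paper squeezes $A_L(\mu)$ between $\langle c,h\rangle-\alpha(c)$ and $\beta(h)$ directly.
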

	\begin{proof}
		Fix $h\in \rho(\m_c)$. This implies that there exists $\mu\in\m_c$ which satisfies
		$\rho(\mu)=h$ and $A_{L_\eta}(\mu)=-\alpha(c)$.
		Then
		\[
			A_L(\mu)=A_{L_\eta}(\mu)+\langle c,\rho(\mu)\rangle
			=\langle c,h\rangle -\alpha(c)\le 
			\sup_{c'\in H^1(M;\Rset)} \big\{\langle c',h\rangle -\alpha(c') \big\} = \beta(h)
		\]
		holds.
		On the other hand, by the definition of $\beta(h)$,
		we have $\beta(h)\le A_L(\mu)$.
		So the identity
		\[
			\alpha(c)+\beta(h)=\langle c,h\rangle
		\]
		holds and it implies $h\in\partial \alpha(c)$.
		
		Next, take $h\in\partial \alpha(c)$ and $\mu\in \m^h$.
		Note that $\rho(\mu)=h$ and $\beta(h)=A_L(\mu)$ hold.
		Then it follows from $h\in\partial \alpha(c)$ that
		\[
			-\alpha(c)=\beta(h)-\langle c,h\rangle=A_L(\mu)-\langle c,\rho(\mu)\rangle
			=A_{L_\eta}(\mu).
		\]
		So $\mu$ is an element of $\m_c$. Thus, we obtain $h=\rho(\mu)\in\rho(\m_c)$.
	\end{proof}
	We now define the action-minimizing sets due to Mather.
	For $c\in H^1(M;\Rset)$ and $h\in H_1(M;\Rset)$,
	the \emph{Mather set} of the cohomology class $c$ and the homology class $h$ are given by
	\begin{align*}
		\M_c=\bigcup_{\mu\in \m_c} {\rm supp}\  \mu,\quad
		\M^h=\bigcup_{\mu\in \m^h} {\rm supp} \ \mu,
	\end{align*}
	respectively. They are nonempty, compact and invariant sets for $\Phi^L_t$.

\subsection{Action-minimizing curves and the Aubry set and the Ma\~{n}\'e set}
	Next, we introduce action-minimizing curves and associated invariant sets. For given $x,y\in M$ and $T>0$, let $\mathcal{C}^T(x,y)$ be the set of absolutely continuous curves $\gamma:[0,T]\to M$ with $\gamma(0)=x$ and $\gamma(T)=y$. The associated action along $\gamma\in \mathcal{C}^T(x,y)$ is defined by
	\begin{align*}
		\mathscr{A}_L(\gamma)=\int_0^T L(\gamma(t),\dot{\gamma}(t)) \ dt.
	\end{align*}
	For a closed 1-form $\eta$, the \emph{Ma\~{n}\'e potential} $\phi_\eta:M\times M\to \Rset$ is given by
	\begin{align*}
		\phi_\eta(x,y)=\inf_{T>0}\min_{\gamma\in\mathcal{C}^T(x,y)} \mathscr{A}_{L+\alpha(c)}(\gamma)
	\end{align*}
	where $c$ is the cohomology class of $\eta$. Note that $\phi_\eta$ is Lipschitz and satisfies
	\begin{align}\label{eqn:mane}
		\phi_\eta(x,y)+\phi_\eta(y,x)\ge 0, \quad x,y\in M.
	\end{align}
	
	Let $c=[\eta]\in H^1(M;\Rset)$. An absolutely continuous curve $\gamma:\Rset\to M$ is said to be \emph{$c$-semistatic} if
	\begin{align*}
		\mathscr{A}_{L+\alpha(c)}(\gamma|_{[a,b]})=\phi_{\eta}(\gamma(a),\gamma(b))
	\end{align*}
	holds for any $[a,b]\subset \Rset$. Note that the above definition is well-defined. We remark that a $c$-semistatic curve is always a solution of the Euler-Lagrange equation.
	An absolutely continuous curve $\gamma:\Rset\to M$ is said to be \emph{$c$-static} if
	\begin{align*}
		\mathscr{A}_{L+\alpha(c)}(\gamma|_{[a,b]})=-\phi_{\eta}(\gamma(b),\gamma(a))
	\end{align*}
	holds for any $[a,b]\subset \Rset$. It is clear that a $c$-static curve is $c$-semistatic.
	Then the \emph{Ma\~{n}\'e set} $\N_c$ and the \emph{Aubry set} $\A_c$ for $c\in H^1(M;\Rset)$ are given by
	\begin{align*}
		\N_c&=\{(\gamma(t),\dot{\gamma}(t))\in TM; \gamma \text{ is $c$-semistatic}\},\\
		\A_c&=\{(\gamma(t),\dot{\gamma}(t))\in TM; \gamma \text{ is $c$-static}\},
	\end{align*}
	which are nonempty, compact and invariant sets for $\Phi^L_t$.

	For the action-minimizing sets $\M_c$, $\A_c$ and $\N_c$ of cohomology class $c\in H^1(M; \Rset)$,
	the following important inclusions hold:
	\begin{align}\label{eqn:inclusions}
		\M_c\subset \A_c\subset \N_c\subset \E(\alpha(c))
	\end{align}
	where $\E(\alpha(c))=\{(x,v)\in TM: E(x,v)=\alpha(c)\}$ is the level set of the energy $E(x,v)=\langle \frac{\partial L}{\partial v}(x,v), v\rangle-L(x,v)$.
	Another important feature is that $\M_c$ and $\A_c$ are graphs over $M$, i.e., the projections along the fibers $\pi|_{\M_c}$ and $\pi|_{\A_c}$ are injective with Lipschitz inverses
	where $\pi:TM\to M$ is the canonical projection.
	The set $\pi(\M_c)$ and $\pi(\A_c)$ are called the \emph{projected Mather set} and the \emph{projected Aubry set} for $c\in H^1(M;\Rset)$.

	\subsection{Weak KAM solutions for the Hamilton-Jacobi equation}
	Fathi's weak KAM theory \cite{F08} provides interesting characterizations of the Aubry set and the Ma\~{n}\'{e} set in terms of weak KAM solutions of Hamilton-Jacobi equation. In this subsection, we would like to move the Hamiltonian formalism rather than the Lagrangian one. Thus we consider the associated Hamiltonian $H$ with the Lagrangian $L$. The Hamiltonian $H$ induces the Hamiltonian flow $\Phi^H_t$, which is conjugate to the Euler-Lagrange flow $\Phi^L_t$ through the Legendre transform $\mathcal{L}$ associated with $L$.
	 For $c\in H^1(M;\Rset)$ we can define the dual Mather/Aubry/Ma\~{n}\'e sets
	 in the cotangent bundle $T^*M$ as
	 \begin{align*}
	 	\M_c^*=\mathcal{L}(\M_c),\quad
		\A_c^*=\mathcal{L}(\A_c), \quad
		\N_c^*=\mathcal{L}(\N_c).
	 \end{align*}
	Note that the inclusions among these sets and the graph properties of the Mather set and the Aubry set still hold. Abusing the notation we denote by $\pi$ the canonical projection $\pi:T^*M\to M$.
	
	Let $\eta$ be a closed 1-form whose cohomology class is $c=[\eta]\in H^1(M;\Rset)$.
	Consider the Hamilton-Jacobi equation of the form
	\begin{align}\label{eqn:HJ}
		H(x,\eta(x)+d_x u)=\alpha(c).
	\end{align}
	It is generally impossible to find classical solutions of \eqref{eqn:HJ}
	and thus one consider a more weaker notion of solutions: weak KAM solutions.
	Following Fathi \cite{F08}, we introduce the negative (resp. positive) Lax-Oleinik semi-group $\{T^{\eta,-}_t\}_{t>0}$ (resp. $\{T^{\eta,+}_t\}_{t>0}$) on $u\in C(M;\Rset)$ defined by
	\begin{align*}
		T^{\eta,-}_t u(x)&=\min\left(u(\gamma(0))+\int_0^t L_{\eta} (\gamma(s),\dot{\gamma}(s))\ ds\right),\\
		T^{\eta,+}_t u(x)&=\max\left(u(\gamma(t))-\int_0^t L_{\eta} (\gamma(s),\dot{\gamma}(s))\ ds\right)
	\end{align*}
	where the minimum (resp. maximum) is taken on the set of absolutely continuous curves $\gamma:[0,t]\to M$ such that $\gamma(t)=x$ (resp. $\gamma(0)=x$).
	Fathi's weak KAM theorem asserts that for each closed 1-form $\eta$ with $c=[\eta]\in H^1(M;\Rset)$ there exists $u\in C^0(M;\Rset)$
	such that $T^{\eta,-}_t u=u-\alpha(c)t$ (resp. $T^{\eta,+}_t u=u+\alpha(c)t$) for all $t>0$.
	Such function $u$ is called a negative (resp. positive) \emph{weak KAM solution} for $L_\eta$ and
	the set of negative (resp. positive) weak KAM solutions for $L_\eta$ will be denoted
	by $\S^-_\eta$ (resp. $\S^+_\eta$).
	Two solutions $u_-\in \S^-_\eta$ and $u_+\in \S^+_\eta$ are said to be \emph{conjugate}
	if they coincide on $\pi(\M^*_c)$.
	Each negative weak KAM solution has a unique conjugate positive weak KAM solution.
	We will denote by $(u_-,u_+)$ a couple of conjugate weak KAM solutions
	and for each $(u_-,u_+)$ we define
	\begin{align*}
		\I(u_-,u_+)&=\{x\in M; \ u_-(x)=u_+(x)\},\\
		\widetilde{\I}(u_-,u_+)&=\{(x,\eta(x)+p)\in T^*M; x\in \I(u_-,u_+), p=d_xu_-=d_xu_+\}.
	\end{align*}
	Then for $c=[\eta]\in H^1(M;\Rset)$
	the Aubry set and the Ma\~{n}\'{e} set are characterized by weak KAM solutions:
	\begin{align}
		\A^*_c=\bigcap_{(u_-,u_+)} \I(u_-,u_+),\label{eqn:Fathi1}\\
		\N^*_c=\bigcup_{(u_-,u_+)} \I(u_-,u_+)\label{eqn:Fathi2}
	\end{align}
	where the intersection and the union are taken over all pairs $(u_-,u_+)$
	of conjugate weak KAM solutions for $L_\eta$.
	\begin{rmk}\label{rmk:Aubry}
		As stated in \cite{M11}, the dual Aubry set of cohomology class $c$ can be seen as
		the intersection of Lipschitz Lagrangian graphs in $T^*M$ with the standard symplectic form.
	\end{rmk}
	
	\begin{prop}\label{prop:HJ}
		For each $c=[\eta]\in H^1(M;\Rset)$, the following statements are equivalent:
		\begin{itemize}
			\item[(i)] $\pi(\A_c)=M$ holds.
			\item[(ii)] There is a unique $C^1$ negative (resp. positive) weak KAM solution $u$ for $L_\eta$.
		\end{itemize}
		Moreover, in this case, $u\in C^{1,1}(M;\Rset)\cap \S^-_{\eta}\cap \S^+_{\eta}$ and $u$ is a $C^1$ solution of the Hamilton-Jacobi equation \eqref{eqn:HJ}.
	\end{prop}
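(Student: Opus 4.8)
The plan is to prove (i)$\Leftrightarrow$(ii) together with the \emph{moreover} part, organized around one regularity observation: \emph{for $u\in C^1(M;\Rset)$ the three properties $u\in\S^-_\eta$, $u\in\S^+_\eta$, and ``$u$ is a classical solution of \eqref{eqn:HJ}'' are equivalent.} Indeed, any $u\in\S^-_\eta$ is a viscosity solution of \eqref{eqn:HJ} by Fathi's weak KAM theorem \cite{F08}, and a $C^1$ viscosity solution is classical; conversely a $C^1$ classical solution is a viscosity solution, hence a negative weak KAM solution. Applying the same equivalence to the time-reversed Tonelli Lagrangian $v\mapsto L(x,-v)$ (whose negative weak KAM solutions for $-\eta$ are precisely the functions $-u$ with $u\in\S^+_\eta$, and whose Hamilton--Jacobi equation at the critical value is again $H(x,\eta+d_xu)=\alpha(c)$) yields the statement for $\S^+_\eta$. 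In particular $C^1\cap\S^-_\eta=C^1\cap\S^+_\eta$, and this common set equals the set of $C^1$ classical solutions of \eqref{eqn:HJ}.

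First I would prove (i)$\Rightarrow$(ii) and the \emph{moreover} part. Assume $\pi(\A_c)=M$, and let $u_-\in\S^-_\eta$ be arbitrary (such solutions exist by \cite{F08}). Every negative weak KAM solution is differentiable at each point of the projected Aubry set, with its differential there uniquely determined by $\A^*_c$ \cite{F08}; since $\pi(\A_c)=M$, $u_-$ is differentiable everywhere, and a (locally semiconcave) negative weak KAM solution differentiable everywhere is $C^1$. Moreover the graph $\{(x,\eta(x)+d_xu_-):x\in M\}$ then coincides with $\A^*_c$, which by the graph property of the Aubry set is a Lipschitz graph over $M$; hence $x\mapsto\eta(x)+d_xu_-$ is Lipschitz and, $\eta$ being $C^1$, $u_-\in C^{1,1}(M;\Rset)$. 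By the regularity observation $u_-$ is also a classical solution of \eqref{eqn:HJ} and belongs to $\S^+_\eta$. Finally, if $u_-,v_-\in C^1\cap\S^-_\eta$, both of their graphs equal $\A^*_c$, so $d u_-=d v_-$ on $M$ and, $M$ being connected, $u_--v_-$ is constant; this is the uniqueness (up to additive constants) in (ii).

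It remains to prove (ii)$\Rightarrow$(i). Let $u$ be a $C^1$ negative weak KAM solution; by the regularity observation $u\in\S^-_\eta\cap\S^+_\eta$ and $u$ is a classical solution of \eqref{eqn:HJ}. Fix $x\in M$. Since $u$ is differentiable at $x$, there are a unique backward $u$-calibrated curve $\gamma^-\colon(-\infty,0]\to M$ and a unique forward $u$-calibrated curve $\gamma^+\colon[0,\infty)\to M$ with $\gamma^\pm(0)=x$; differentiating the two calibration identities at $t=0$ and combining with the Fenchel equality and $H(x,\eta(x)+d_xu)=\alpha(c)$ shows that $\dot\gamma^-(0)$ and $\dot\gamma^+(0)$ both equal the velocity Legendre-dual to $\eta(x)+d_xu$. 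By uniqueness of Euler--Lagrange solutions, $\gamma^-$ and $\gamma^+$ are pieces of one orbit $\gamma_x\colon\Rset\to M$ that is $u$-calibrated on all of $\Rset$, hence $c$-semistatic; thus $(x,\dot\gamma_x(0))\in\N_c$, and since $x$ was arbitrary, $\pi(\N_c)=M$. To upgrade this to $\pi(\A_c)=M$ one must show each $\gamma_x$ is $c$-static, and here the uniqueness hypothesis in (ii) is used: if some $\gamma_x$ failed to be static, its $\alpha$- and $\omega$-limit sets would lie in two distinct static classes of $\A_c$, and assigning independent additive constants on these classes would produce negative weak KAM solutions that are not translates of one another; one then checks — exploiting the presence of the $C^1$ solution $u$ — that (representatives of) the associated Mañé potentials are themselves $C^1$, contradicting uniqueness. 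Hence $\pi(\A_c)=M$, and the \emph{moreover} assertions follow from (i)$\Rightarrow$(ii).

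I expect two points to need genuine care. The first is the regularity observation of the opening paragraph, specifically that a $C^1$ negative weak KAM solution is automatically a positive one; the implication ``$C^1$ negative weak KAM $\Rightarrow$ classical solution'' is standard viscosity theory, but the passage back into $\S^+_\eta$ rests on identifying positive weak KAM solutions with negatives of negative weak KAM solutions of the time-reversed system. The second, and the real heart of the matter, is the last step of (ii)$\Rightarrow$(i): manufacturing a \emph{second} $C^1$ negative weak KAM solution out of the assumption $\pi(\A_c)\ne M$. That uniqueness — not mere existence — is what is needed here is already visible for one-degree-of-freedom mechanical Hamiltonians, where $\pi(\A_c)$ can be a proper subset of $M$ while \eqref{eqn:HJ} still has smooth solutions (cf.\ Section~\ref{sec:examples}). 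Everything else is bookkeeping with the structure theory of $\M_c$, $\A_c$, $\N_c$ and of weak KAM solutions recalled in Section~\ref{pre}.
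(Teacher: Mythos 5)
Your treatment of (i)$\Rightarrow$(ii) and of the \emph{moreover} clause is essentially the paper's: differentiability of every weak KAM solution on the projected Aubry set with differential prescribed by $\A^*_c$, the Lipschitz graph property of $\A_c$ giving $C^{1,1}$, and Fathi's theorem that a $C^1$ negative weak KAM solution lies in $\S^-_\eta\cap\S^+_\eta$ and solves \eqref{eqn:HJ} classically. That direction is fine.

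The problem is (ii)$\Rightarrow$(i). The uniqueness in (ii) is meant --- and is used by the paper --- as uniqueness of $u$ within \emph{all} of $\S^-_\eta$ up to constants, not merely among its $C^1$ elements. Under that reading the implication is immediate: $u\in C^1$ forces $u\in\S^-_\eta\cap\S^+_\eta$ (Theorem 4.11.8 in \cite{F08}), so $(u,u)$ is a conjugate pair, by uniqueness it is the \emph{only} conjugate pair, and \eqref{eqn:Fathi1} then gives $\pi(\A^*_c)=\I(u,u)=M$. Your static-class construction is instead aimed at the other reading (uniqueness among $C^1$ solutions), and there it has a genuine gap: the claim that the weak KAM solutions manufactured from Ma\~n\'e potentials based at distinct static classes ``are themselves $C^1$'' is asserted, not proved, and is not to be expected --- such Busemann-type solutions are in general only Lipschitz and locally semiconcave, and the presence of one $C^1$ solution does not regularize the others. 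Without that step your construction only contradicts uniqueness within all of $\S^-_\eta$, i.e.\ you have silently reverted to the first reading, in which case the entire detour through $\pi(\N_c)=M$, calibrated curves and static classes is unnecessary. (The argument as written also omits the case of a single static class, where no second solution can be produced this way; there one would have to invoke $\N_c=\A_c$ instead.) I recommend replacing your (ii)$\Rightarrow$(i) with the short argument via \eqref{eqn:Fathi1}, after fixing the intended meaning of the uniqueness hypothesis.
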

	\begin{proof}
	 (i)$\Rightarrow$(ii): Assume that (i) holds. Then $\A^*_c$ is a graph on $M$.
	 For any conjugate pair $(u_-,u_+) \in S^-_\eta\times S^+_\eta$,
	 it follows from Eq.~\eqref{eqn:Fathi1} that $\A^*=\{(x,\eta+du_-); x\in M\}=\{(x,\eta+du_+); x\in M\}$, 
	 which implies $du_+=du_-$ and
	 $L_\eta$ has a unique negative (resp. positive) weak KAM solution up to constants.
	 Since the pair $(u_-,u_+)$ is conjugate, we have $u_-=u_+\in S^-_\eta\cap S^+_\eta$.
	 We see that $u_-=u_+$ is a $C^1$ solution of
	 the Hamilton-Jacobi equation $H(x,\eta+du)=k$ for some $k\in\Rset$ by Theorem~4.11.8 in \cite{F08}
	 and the value $k$ must be $\alpha(c)$ by Corollary 4.8.5 in \cite{F08}.
	 Moreover, we obtain $u_-=u_+\in C^{1,1}(M;\Rset)$
	 since $\A^*=\{(x,\eta+du_\pm); x\in M\}$ holds and $\pi|_{\A_c}$ has the Lipschitz inverse.
	Consequently, we deduce that
	$L_\eta$ has a unique $C^1$ negative (resp. positive) weak KAM solution $u$
	which satisfies $u\in C^{1,1}(M;\Rset)\cap \S^-_{\eta}\cap \S^+_{\eta}$ and
	it is a $C^1$ solution of the Hamilton-Jacobi equation \eqref{eqn:HJ}.\\
	(ii)$\Rightarrow$(i): Let $u$ be a unique negative (resp. positive) weak KAM solution for $L_\eta$ which is $C^1$.
	Then $u\in \S^-_\eta\cap \S^+_\eta$ holds by Theorem~4.11.8 in \cite{F08}.
	Hence $(u,u)$ is the unique conjugate pair up to constants.
	Therefore, $\pi(\A_c)=M$ follows from Eq.~\eqref{eqn:Fathi1}.
	\end{proof}

\section{Proofs of Theorem~\ref{thm:exposed} and its corollaries}
	In this section, we show Theorem~\ref{thm:exposed} and Corollaries \ref{cor:C1-str} and \ref{cor:C1-str-2}. We begin with the following proposition.	
	\begin{prop}\label{prop:main}
		Let $\alpha:H^1(M;\Rset)\to\Rset$ be Mather's alpha function.
		Let $c_0, c_1\in H^1(M;\Rset)$ be two distinct cohomologies
		and let denote the segment $[c_0c_1]=\{c_t:=(1-t)c_0+tc_1; t\in [0,1]\}$.
		Then the following statements are mutually equivalent:
		\begin{itemize}
			\item[(i)] $\alpha$ is constant on $[c_0c_1]$.
			\item[(ii)] $\alpha$ is affine on $[c_0c_1]$.
			\item[(iii)] $\M_{c_t}\subset \M_{c_0}\cap \M_{c_1}$ holds for all $t\in (0,1)$.
			\item[(iv)] $\M_{c_0}\cap \M_{c_1}\neq \emptyset$ .
			\item[(v)] $\A_{c_t}\subset \A_{c_0}\cap \A_{c_1}$ holds for all $t\in (0,1)$.
			\item[(vi)] $\A_{c_0}\cap \A_{c_1} \neq \emptyset$.
		\end{itemize}
		Moreover, in this case,
		\begin{itemize}
			\item[(1)] $\pi(\M_{c_t})\subset \pi(\M_{c_0})\cap \pi(\M_{c_1})$ holds for all $t\in (0,1)$.
			\item[(2)] $\pi(\A_{c_t})\subset \pi(\A_{c_0})\cap \pi(\A_{c_1})$ holds for all $t\in (0,1)$.
		\end{itemize}
	\end{prop}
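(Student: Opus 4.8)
The plan is to prove the cycle (i)$\Rightarrow$(ii)$\Rightarrow$(iii)$\Rightarrow$(iv)$\Rightarrow$(i) and then attach the Aubry-set statements through (ii)$\Rightarrow$(v)$\Rightarrow$(vi)$\Rightarrow$(iv); the addenda (1) and (2) then follow by applying $\pi$ to (iii) and (v). Two auxiliary facts are used throughout. First, sharpening the proof of Proposition~\ref{prop:rotation}: since $A_L(\mu)\ge\beta(\rho(\mu))$ and $\langle c,\rho(\mu)\rangle-\alpha(c)\le\beta(\rho(\mu))$ for every $\mu\in\m(L)$, the equality $A_{L_\eta}(\mu)=-\alpha(c)$ forces both $A_L(\mu)=\beta(\rho(\mu))$ and $\rho(\mu)\in\partial\alpha(c)$; hence $\m_c=\bigcup_{h\in\partial\alpha(c)}\m^h$ and $\M_c=\bigcup_{h\in\partial\alpha(c)}\M^h$. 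Second, an invariant probability measure $\mu$ lies in $\m_c$ if and only if $\supp\mu\subset\A_c$: one implication is the inclusion $\M_c\subset\A_c$; for the other, every orbit in $\A_c\subset\N_c$ is $c$-semistatic, so $\int_0^T L_\eta(\Phi_s^L(x,v))\,ds+\alpha(c)T=\phi_\eta\big(x,\pi(\Phi_T^L(x,v))\big)$ stays bounded uniformly in $T$ (as $\phi_\eta$ is continuous on the compact $M\times M$); averaging in $(x,v)$ against $\mu$, using $\Phi_s^L$-invariance, and letting $T\to\infty$ forces $A_{L_\eta}(\mu)=-\alpha(c)$.

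Granting these, (i)$\Rightarrow$(ii) is immediate. For (ii)$\Rightarrow$(iii): a standard convex-analysis argument shows that, $\alpha$ being affine on $[c_0c_1]$, one has $\partial\alpha(c_t)=\partial\alpha(c_0)\cap\partial\alpha(c_1)$ for $t\in(0,1)$; combining with the first auxiliary fact, $\M_{c_t}=\bigcup_{h\in\partial\alpha(c_t)}\M^h\subset\M_{c_0}\cap\M_{c_1}$. Next, (iii)$\Rightarrow$(iv) holds because $\M_{c_t}\ne\emptyset$. For (iv)$\Rightarrow$(i): if $(x,v)\in\M_{c_0}\cap\M_{c_1}$ then $E(x,v)=\alpha(c_0)$ and $E(x,v)=\alpha(c_1)$ by \eqref{eqn:inclusions}, so $\alpha(c_0)=\alpha(c_1)$; moreover the orbit closure $O$ of $(x,v)$ is a compact invariant subset of $\M_{c_0}\cap\M_{c_1}\subset\A_{c_0}\cap\A_{c_1}$, so any invariant measure $\mu$ carried by $O$ lies in $\m_{c_0}\cap\m_{c_1}$ by the second auxiliary fact, whence $\alpha(c_t)\ge-A_{L_{\eta_t}}(\mu)=(1-t)\alpha(c_0)+t\alpha(c_1)=\alpha(c_0)$, and convexity of $\alpha$ gives the reverse inequality, so $\alpha$ is constant on $[c_0c_1]$. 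The same orbit-closure argument gives (vi)$\Rightarrow$(iv): from $(x,v)\in\A_{c_0}\cap\A_{c_1}$ one gets $\mu\in\m_{c_0}\cap\m_{c_1}$ and hence $\supp\mu\subset\M_{c_0}\cap\M_{c_1}\ne\emptyset$; and (v)$\Rightarrow$(vi) is just $\A_{c_t}\ne\emptyset$.

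The delicate implication is (ii)$\Rightarrow$(v), which I would treat directly with static curves and critical subsolutions. Assume $\alpha$ is affine on $[c_0c_1]$ and choose closed $1$-forms $\eta_i$ with $[\eta_i]=c_i$ and $\eta_t=(1-t)\eta_0+t\eta_1$. The key observation is that domination is stable under convex combination along the segment: if $u$ is a critical subsolution of $L_{\eta_0}$ and $u_1$ of $L_{\eta_1}$ (for instance weak KAM solutions, which exist by Fathi's theorem), then for all $x,y\in M$ and every curve $\sigma$ from $x$ to $y$ the function $w:=(1-t)u+tu_1$ satisfies $w(y)-w(x)\le(1-t)\mathscr{A}_{L_{\eta_0}+\alpha(c_0)}(\sigma)+t\mathscr{A}_{L_{\eta_1}+\alpha(c_1)}(\sigma)=\mathscr{A}_{L_{\eta_t}+\alpha(c_t)}(\sigma)$, using $L_{\eta_t}=(1-t)L_{\eta_0}+tL_{\eta_1}$ and $\alpha(c_t)=(1-t)\alpha(c_0)+t\alpha(c_1)$; hence $w$ is a critical subsolution of $L_{\eta_t}$. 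Now take any $c_t$-static curve $\gamma:\Rset\to M$ with $t\in(0,1)$. It is calibrated by every critical subsolution of $L_{\eta_t}$ (domination gives one inequality; the static identity $\mathscr{A}_{L_{\eta_t}+\alpha(c_t)}(\gamma|_{[a,b]})=-\phi_{\eta_t}(\gamma(b),\gamma(a))$ combined with domination applied to the pair $\gamma(b),\gamma(a)$ gives the other), in particular by $w$, so
\[
(1-t)\big(u(\gamma(b))-u(\gamma(a))\big)+t\big(u_1(\gamma(b))-u_1(\gamma(a))\big)=\mathscr{A}_{L_{\eta_t}+\alpha(c_t)}(\gamma|_{[a,b]}).
\]
But the left side is $\le(1-t)\mathscr{A}_{L_{\eta_0}+\alpha(c_0)}(\gamma|_{[a,b]})+t\mathscr{A}_{L_{\eta_1}+\alpha(c_1)}(\gamma|_{[a,b]})$, which equals the right side, so each summand is extremal; in particular $u(\gamma(b))-u(\gamma(a))=\mathscr{A}_{L_{\eta_0}+\alpha(c_0)}(\gamma|_{[a,b]})$ for all $a<b$. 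Thus $\gamma$ is calibrated on all of $\Rset$ by \emph{every} critical subsolution of $L_{\eta_0}$, in particular by every $u_-\in\S^-_{\eta_0}$. By the standard description of the Aubry set via conjugate weak KAM solutions---a curve calibrated by $u_-$ on $\Rset$ has its orbit in $\I(u_-,u_+)$ for the conjugate $u_+$, and its velocity is recovered from $du_-$ by the Legendre transform---together with \eqref{eqn:Fathi1}, this yields $(\gamma(s),\dot\gamma(s))\in\A_{c_0}$ for all $s$. By symmetry $(\gamma(s),\dot\gamma(s))\in\A_{c_1}$, so $\A_{c_t}\subset\A_{c_0}\cap\A_{c_1}$, which is (v).

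I expect (ii)$\Rightarrow$(v) to be the main obstacle: the Mather-set conditions are essentially convex-analytic (they concern rotation vectors and the subdifferential of $\alpha$), whereas (v) is a statement about individual static orbits rather than invariant measures. The device that bridges the gap is that convex combinations of critical subsolutions at $c_0$ and $c_1$ are critical subsolutions at the interior point $c_t$ \emph{precisely because $\alpha$ is affine on $[c_0c_1]$}; once that is available, the calibration identity for the combined subsolution is forced to split through the two domination inequalities, and the hypothesis $t\in(0,1)$ is exactly what makes this combined object usable at $c_t$.
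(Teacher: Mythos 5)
Your overall architecture is sound and most of the steps check out. The measure-theoretic half --- the identity $\m_c=\bigcup_{h\in\partial\alpha(c)}\m^h$, the equivalence $\mu\in\m_c\Leftrightarrow\supp\mu\subset\A_c$ proved by integrating the semistatic identity against $\mu$, and the orbit-closure arguments for (iv)$\Rightarrow$(i) and (vi)$\Rightarrow$(iv) --- is correct and essentially parallels the paper, which cites Theorem~IV of \cite{CDI97} and Krylov--Bogolioubov where you reprove the former from scratch; your (ii)$\Rightarrow$(iii) via $\partial\alpha(c_t)=\partial\alpha(c_0)\cap\partial\alpha(c_1)$ replaces the paper's direct computation with minimizing measures, and both work.

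The genuine gap is at the very end of (ii)$\Rightarrow$(v). Everything up to ``$\gamma$ is calibrated on all of $\Rset$ by every critical subsolution of $L_{\eta_0}$'' is correct, and that step really does use affineness as you emphasize. But the fact you then invoke --- that a curve calibrated by $u_-$ on $\Rset$ has its orbit in $\I(u_-,u_+)$ for the conjugate $u_+$ --- is false: curves calibrated on all of $\Rset$ by a \emph{single} negative weak KAM solution constitute the Ma\~n\'e set, not the Aubry set (the pendulum separatrices at the critical cohomology are calibrated on $\Rset$ by one weak KAM solution yet are only semistatic). As written, your last step therefore only yields $(\gamma,\dot\gamma)\subset\N_{c_0}$. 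The stronger information you actually possess (calibration by \emph{every} critical subsolution) does imply staticity, but this requires an argument you have not supplied. The cleanest repair inside your framework is to apply your calibration identity to the particular critical subsolution $u_0=\phi_{\eta_0}(\gamma(b),\cdot)$, which gives $\mathscr{A}_{L_{\eta_0}+\alpha(c_0)}(\gamma|_{[a,b]})=\phi_{\eta_0}(\gamma(b),\gamma(b))-\phi_{\eta_0}(\gamma(b),\gamma(a))=-\phi_{\eta_0}(\gamma(b),\gamma(a))$, i.e.\ exactly the $c_0$-static identity, using that $\phi_{\eta}(x,x)=0$ for every $x$ (this follows from \eqref{eqn:mane} together with constant loops of duration $T$, letting $T\to\infty$ to rule out $L(x,0)+\alpha(c)<0$ and $T\to0$ to get the upper bound). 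The paper itself avoids subsolutions altogether: it writes the $c_t$-static identity, bounds $-\phi_{\eta_t}(\gamma(b),\gamma(a))$ from above by $-(1-t)\phi_{\eta_0}(\gamma(b),\gamma(a))-t\phi_{\eta_1}(\gamma(b),\gamma(a))$ via superadditivity of the infimum, bounds each $\mathscr{A}_{L_{\eta_i}+\alpha(c_i)}(\gamma|_{[a,b]})$ from below by $-\phi_{\eta_i}(\gamma(b),\gamma(a))$ using \eqref{eqn:mane}, and squeezes --- a shorter, more elementary incarnation of the same splitting idea.
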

	\begin{proof}
		(i)$\Rightarrow$(ii), (iii)$\Rightarrow$(iv), (iv)$\Rightarrow$(vi),
		(v)$\Rightarrow$(vi), (iii)$\Rightarrow$(1) and (v)$\Rightarrow$(2) are trivial.

		(ii)$\Rightarrow$(iii):
		Assume that
		\begin{align*}
			\alpha(c_t)=(1-t)\alpha(c_0)+t\alpha(c_1), \quad t\in[0,1].
		\end{align*}
		This implies
		\begin{align*}
			(1-t)\int L_{\eta_{c_0}}d\mu_{c_t}+t\int L_{\eta_{c_1}}d\mu_{c_t}
			=(1-t)\min_{\mu\in \m} \int L_{\eta_{c_0}} d\mu
				+t\min_{\mu\in \m} \int L_{\eta_{c_1}} d\mu,
		\end{align*}
		equivalently,
		\begin{align*}
			(1-t)\left\{\int L_{\eta_{c_0}}d\mu_{c_t}-
			\min_{\mu\in \m} \int L_{\eta_{c_0}} d\mu\right\}
				+t\left\{\int L_{\eta_{c_1}}d\mu_{c_t}
				-\min_{\mu\in \m} \int L_{\eta_{c_1}} d\mu\right\}=0
		\end{align*}
		for all $t\in[0,1]$ and $\mu_{c_t}\in \m_{c_t}$ where $\eta_{c_i}$ is a closed 1-form on $M$ with
		$[\eta_{c_i}]=c_i$.
		Since $t\in [0,1]$ and
		\begin{align*}
			\int L_{\eta_{c_i}}d\mu_{c_t}
			- \min_{\mu\in \m} \int L_{\eta_{c_i}} d\mu\ge 0,\quad i=0,1,
		\end{align*}
		we have
		\begin{align*}
			\int L_{\eta_{c_i}}d\mu_{c_t}
			= \min_{\mu\in \m} \int L_{\eta_{c_i}} d\mu,\quad t\in(0,1),\quad i=0,1,
		\end{align*}
		which implies $\mu_{c_t}\in \m_{c_0}\cap \m_{c_1}$ for all $t\in(0,1)$.
		Thus (iii) holds.

		(ii)$\Rightarrow$(v):
		Assume that $\alpha$ is affine on $[c_0c_1]$.
		Let $t\in(0,1)$ and $(x,v)\in \A_{c_t}$.
		We denote $\gamma(s)=\pi(\Phi_s^L(x,v))$ for $s\in \Rset$
		where $\Phi_s^L$ is the Euler-Lagrange flow of $L$.
		Then, by the definition of $c_t$-static curves, we have
		\begin{align*}
			\int_a^b\left(L_{\eta_{c_t}}+\alpha(c_t)\right)_{(\gamma(s),\dot{\gamma}(s))}ds=
			-\phi_{\eta_{c_t}}(\gamma(b),\gamma(a))
		\end{align*}
		for all $a<b$. Since $\alpha$ is affine on $[c_0c_1]$,
		it follows that
		\begin{align*}
			(1-t)&\int_a^b\left(L_{\eta_{c_0}}+\alpha(c_0)\right)_{(\gamma(s),\dot{\gamma}(s))}ds+t\int_a^b\left(L_{\eta_{c_1}}+\alpha(c_1)\right)_{(\gamma(s),\dot{\gamma}(s))}ds\\
			&=
			-\inf_{T>0} \min_{\tilde{\gamma}\in C^T(\gamma(b),\gamma(a))}
			\Bigg\{(1-t)\int_0^T \left(L_{\eta_{c_0}}+\alpha(c_1)\right)_{(\tilde{\gamma}(s),\dot{\tilde{\gamma}}(s))}ds\\
			&\qquad\qquad\qquad\qquad\qquad\qquad\qquad
			+t\int_0^T \left(L_{\eta_{c_1}}+\alpha(c_1)\right)_{(\tilde{\gamma}(s),\dot{\tilde{\gamma}}(s))}ds\Bigg\}\\
			&\le
			-(1-t)\phi_{\eta_{c_0}}(\gamma(b),\gamma(a))
			-t\phi_{\eta_{c_1}}(\gamma(b),\gamma(a))
		\end{align*}
		for all $a<b$. Note that the above inequalities does not depend on the choices of representatives of $c_0$ and $c_1$.
		
		On the other hand, by \eqref{eqn:mane}, we have
		\begin{align*}
			\int_a^b\left(L_{\eta_{c_i}}+\alpha(c_i)\right)_{(\gamma(s),\dot{\gamma}(s))}ds
			\ge \phi_{\eta_{c_i}}(\gamma(a),\gamma(b))\\
			\ge -\phi_{\eta_{c_i}}(\gamma(b),\gamma(a))
		\end{align*}
		for all $a<b$ and $i=0,1$.
		Thus $\gamma$ is $c_i$-static curve for $i=0,1$.
		So we obtain $(x,v)\in \A_{c_0}\cap \A_{c_1}$.

		(vi)$\Rightarrow$(i):
		The main idea is based on the proof of Proposition 13 in \cite{A11}. Assume that $\A_{c_0}\cap\A_{c_1}\neq\emptyset$.
			Since $\A_{c_0}\cap\A_{c_1}$ is a nonempty and compact invariant set,
			using the Krylov-Bogolioubov theorem (\cite{W82}, Corollary 6.9.1), there exists an invariant probability measure $\mu$
			which satisfies
			\[
				\supp\ \mu\subset \A_{c_0}\cap\A_{c_1}.
			\]
			By Theorem~IV of \cite{CDI97}, it follows that
			\[
				\mu\in \m_{c_0}\cap\m_{c_1},
			\]
			which implies
			\[
				\int \big(L_{\eta_{c_i}}+\alpha(c_i)\big)d\mu=0,\quad i=0,1.
			\]
			For each $t\in[0,1]$, we have
			\[
				\int \big\{L_{\eta_{c_t}}+(1-t)\alpha(c_0)+t\alpha(c_1)\big\}d\mu=0.
			\]
			Thus,
			\[
				-(1-t)\alpha(c_0)-t\alpha(c_1)=\int L_{\eta_{c_t}}d\mu \ge -\alpha(c_t).
			\]
			Using the convex property of $\alpha$, we obtain the identity
			\[
				\alpha(c_t)=(1-t)\alpha(c_0)+t\alpha(c_1).
			\]
			So we have
			\[
				\int L_{c_t}d\mu=-\alpha(c_t),
			\]
			which means $\mu\in\m_{c_t}$.
			Moreover, since the inclusions \eqref{eqn:inclusions} hold, we see that
			\[
				E(\supp\ \mu)=E(\M_{c_t})=\alpha(c_t)
			\]
			for each $t\in[0,1]$. So $\alpha$ is constant on $[c_0 c_1]$.
	\end{proof}
	\begin{rmk}
		Proposition~\ref{prop:main} is an improvement of Proposition 3.3.6 (ii) of \cite{S15} and Proposition 6 in \cite{M03}. We remark that Proposition 3.3.6 (ii) of \cite{S15} asserts (i) $\Leftrightarrow$ (iii) and Proposition 6 in \cite{M03} does (vi) $\Rightarrow$ (i) and (ii) $\Rightarrow$ (2) respectively.
	\end{rmk}
	%

	Now we show Theorem~\ref{thm:exposed}.
	\begin{proof}[Proof of Theorem~\ref{thm:exposed}]
	From Proposition~\ref{prop:main}, we deduce that (ii), (iii) and (iv) are mutually equivalent. 
	Moreover, (v)$\Rightarrow$(iv) is trivial and the proof of (ii)$\Rightarrow$(v) is done similarly as in the part (vi)$\Rightarrow$(i) of Proposition~\ref{prop:main}. Note that $\supp \ \mu \subset \N_c$ if and only if $\mu\in\m_c$ for an invariant probability measure $\mu\in \m(L)$ and a cohomology class $c\in H^1(M;\Rset)$ (see Remark 4.1.27 and Proposition 4.1.28 in \cite{S15}).
	
		(i)$\Rightarrow$(ii): 
			Assume that (i) holds and $\alpha$ is affine on $[c c']$
			for some $c' \in H^1(M;\Rset)$.
			Then it follows from Proposition~\ref{prop:main} 
			that
			\[
				\M_{c_t}\subset\M_c\cap\M_{c'}
			\]
			for each $c_t:=(1-t)c+tc'$ where $t\in (0,1)$,
			which implies $\m_{c_t}\subset\m_c\cap\m_{c'}$.
			Therefore, by Proposition~\ref{prop:rotation}, we have
			\[
				\partial\alpha(c_t)=\rho(\m_{c_t})\subset
				\rho(\m_c)\cap\rho(\m_{c'})=\partial\alpha(c)\cap\partial\alpha(c').
			\]
			Since $\partial\alpha(c_t)$ is not empty (see (ii) of Proposition~\ref{prop:convex}),
			we can take $h\in\partial\alpha(c_t)$.
			Then $h$ also belongs to $\partial\alpha(c)$ and $\partial\alpha(c')$.
			Moreover, since (i) holds, $\beta$ is differentiable at $h\in \partial\alpha(c_t)\subset \partial\alpha(c)$.
			Thus,
			\[
				c_t, c, c'\in\partial \beta(h)=\{\nabla\beta(h)\},
			\]
			i.e., $c_t=c=c'=\nabla\beta(h)$. However, $c\neq c'$. This is contradiction.

		(ii)$\Rightarrow$(i): 
			Assume that (ii) holds and
			there exists $h\in \partial\alpha(c)$ such that $\#\partial\beta(h)\neq1$.
			Then there exists $c'\in H^1(M;\Rset)\setminus\{c\}$ such that $c'\in\partial\beta(h)$.
			So $h\in\partial\alpha(c)\cap\partial\alpha(c')$ holds.
			This implies
			\[
				\M^h\subset\M_{c}\cap\M_{c'}.
			\]
			Note that $h\in\partial\alpha(c)$ if and only if $\M^h\subset\M_{c}$ (see Lemma 3.5 in \cite{FGS09}).
			Since $\M^h$ is not empty, $\alpha$ is affine on $[cc']$ by Proposition~\ref{prop:main}. This is contradiction.
	\end{proof}

	\begin{proof}[Proof of Corollary~\ref{cor:C1-str}]
	(ii)$\Rightarrow$(i) is trivial and the converse immediately follows from (iii) of Proposition~\ref{prop:convex}.
	Since
	\begin{align}\label{eqn:mcmh}
		\bigcup_{c\in H^1(M;\Rset)}\m_c=\bigcup_{h\in H_1(M;\Rset)}\m^h
	\end{align}
	holds (see Equation (VI) in \cite{M96}), using Proposition~\ref{prop:rotation}, we have
	\[
		\bigcup_{c\in H^1(M;\Rset)}\partial\alpha(c)=\bigcup_{c\in H^1(M;\Rset)}\rho(\m_c)
		=\bigcup_{h\in H_1(M;\Rset)}\rho(\m^h)=H_1(M;\Rset).
	\]
	Hence Theorem~\ref{thm:exposed} implies that (i), (iv), (v), (vi) and (vii) are mutually equivalent.
	Moreover, Proposition~\ref{prop:main} implies (iii) $\Leftrightarrow$ (iv), which completes the proof.
	\end{proof}

	For Corollary~\ref{cor:C1-str-2}, we need the following lemma.
	\begin{lem}\
		\label{lem:map}
		\begin{itemize}
			\item[(i)]	If $\alpha$ is differentiable everywhere, then $\nabla\alpha:H^1(M;\Rset)\to H_1(M;\Rset)$ is a continuous and surjective map.
			\item[(ii)]	If $\beta$ is differentiable everywhere, then $\nabla\beta:H_1(M;\Rset)\to H^1(M;\Rset)$ is a continuous and surjective map.
		\end{itemize}
	\end{lem}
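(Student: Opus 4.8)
The plan is to treat (i) and (ii) symmetrically, exploiting that $\alpha$ and $\beta$ are convex conjugates of one another, both finite everywhere and superlinear. I will argue (i) in detail; statement (ii) then follows verbatim after exchanging the roles of $\alpha$ and $\beta$ and of $H^1(M;\Rset)$ and $H_1(M;\Rset)$.

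For continuity, there is essentially nothing to do beyond citing the machinery already assembled: if $\alpha$ is differentiable everywhere, then by Proposition~\ref{prop:convex}(iii) (applied with $U=H^1(M;\Rset)$) the function $\alpha$ is $C^1$ on all of $H^1(M;\Rset)$, so $\nabla\alpha$ is continuous. For surjectivity, fix an arbitrary $h\in H_1(M;\Rset)$. By Proposition~\ref{prop:convex}(ii) the subdifferential $\partial\beta(h)$ is nonempty (this is where finiteness of $\beta$, hence superlinearity of $\alpha$, is used under the hood), so choose $c\in\partial\beta(h)$. The subdifferential duality recalled in Section~\ref{pre}, namely $c\in\partial\beta(h)\Leftrightarrow h\in\partial\alpha(c)$, then gives $h\in\partial\alpha(c)$; and since $\alpha$ is differentiable at $c$ we have $\partial\alpha(c)=\{\nabla\alpha(c)\}$, whence $h=\nabla\alpha(c)$. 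As $h$ was arbitrary, $\nabla\alpha$ is onto $H_1(M;\Rset)$. The proof of (ii) is identical with $\partial\alpha(c)$ nonempty in place of $\partial\beta(h)$.

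I do not expect any genuine obstacle here: the lemma is a direct consequence of the standard convex-analytic facts collected in Proposition~\ref{prop:convex} together with the $\partial\alpha$–$\partial\beta$ duality. The only point requiring mild care is to invoke the correct items — nonemptiness of the subdifferentials and the "differentiable on a convex open set $\Rightarrow$ $C^1$" statement — rather than re-deriving them, and to note that one uses superlinearity of $\alpha$ (resp. $\beta$) only implicitly, via finiteness of the conjugate, to guarantee the relevant subdifferential is nonempty.
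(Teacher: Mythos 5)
Your proof is correct, and it takes a genuinely different route from the paper's. You argue purely within convex analysis: given $h$, the nonemptiness of $\partial\beta(h)$ (Proposition~\ref{prop:convex}(ii), which rests on $\beta$ being finite, i.e.\ on the superlinearity of $\alpha$) yields some $c\in\partial\beta(h)$, the duality $c\in\partial\beta(h)\Leftrightarrow h\in\partial\alpha(c)$ transfers this to $h\in\partial\alpha(c)=\{\nabla\alpha(c)\}$, and continuity is Proposition~\ref{prop:convex}(iii). The paper instead routes surjectivity through the dynamics: it uses Proposition~\ref{prop:rotation} ($\partial\alpha(c)=\rho(\m_c)$) together with Ma\~{n}\'e's identity $\bigcup_c\m_c=\bigcup_h\m^h$ (Eq.~\eqref{eqn:mcmh}) to conclude $\nabla\alpha(H^1(M;\Rset))=\rho\bigl(\bigcup_h\m^h\bigr)=H_1(M;\Rset)$, and a companion argument with minimizing measures for $\nabla\beta$. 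Your version is more elementary and self-contained (it needs no action-minimizing measures at all, only the standard conjugacy facts already collected in Section~\ref{pre}), while the paper's version keeps the proof aligned with the dynamical interpretation of the subdifferentials that is used elsewhere (e.g.\ in the proof of Corollary~\ref{cor:C1-str}). Both establish exactly the same statement, and your symmetric treatment of (i) and (ii) is legitimate since $\alpha$ and $\beta$ are mutually conjugate finite superlinear convex functions.
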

	\begin{proof}
		First, as in the proof of Corollary~\ref{cor:C1-str},
		\[
			\nabla\alpha(H^1(M;\Rset))= \rho(\m_{H^1(M;\Rset)})
			=\rho(\m^{H_1(M;\Rset)})=H_1(M;\Rset).
		\]
		Thus $\nabla\alpha$ is surjective.
		
		Next, pick $c\in H^1(M;\Rset)$. By Eq.~\eqref{eqn:mcmh},
		there exists $h\in H^1(M;\Rset)$ and $\mu\in\m^c$ such that $\rho(\mu)=h$.
		So
		\[
			h=\rho(\mu)\in\rho(\m_c)=\partial\alpha(c)
		\]
		holds by Proposition~\ref{prop:rotation}. Thus, we have
		\[
			c\in\partial\beta(h)=\{\nabla\beta(h)\},
		\]
		which implies that $\nabla\beta$ is surjective.
		
		The continuities of $\nabla\alpha$ and $\nabla\beta$ are consequences from Proposition~\ref{prop:convex} (iii).
	\end{proof}
	\begin{rmk}\label{rmk:homeo}
		By Lemma~\ref{lem:map} and Proposition~\ref{prop:convex} (iv) and (v), $\nabla\alpha$ (resp. $\nabla \beta$) becomes a bijection only  when both of $\alpha$ and $\beta$ are differentiable everywhere.
	\end{rmk}

	\begin{proof}[Proof of Corollary~\ref{cor:C1-str-2}]\
		(i)$\Leftrightarrow$(ii):
			 This immediately follows from Proposition~\ref{prop:convex} (iv) and Corollary~\ref{cor:C1-str}.\\
		(2): By Lemma~\ref{lem:map} and Remark~\ref{rmk:homeo}, we see that $\nabla\alpha$ and $\nabla\beta$ are homeomorphisms.
		Consider $c\in\{\nabla\beta(h)\}$. Then $h\in \{\nabla\alpha(c)\}$ holds and we have $c\in\{\nabla\beta(\nabla\alpha(c))\}$, which implies $\nabla\beta(\nabla\alpha(c))=c$ for each $c\in H^1(M;\Rset)$.
		Similarly, we have $\nabla\alpha(\nabla\beta(h))=h$ for each $h\in H_1(M;\Rset)$.
		Thus we deduce that $\nabla\beta=(\nabla\alpha)^{-1}$.\\
		(3): Since $\M_c=\cup_{h\in\partial\alpha} \M^h$ (see Equation (VIa) in \cite{M96}), we have $\M_c=\M^{\nabla \alpha (c)}$.
		Using $\nabla\beta=(\nabla\alpha)^{-1}$, we also have $\M^h=\M_{\nabla \beta (h)}$ for each $h\in H_1(M;\Rset)$.\\
		(1) and (4): These properties are consequences of (3) and Corollary~\ref{cor:C1-str}.
	\end{proof}

\section{Proofs of Theorems~\ref{thm:ext-alpha} and \ref{thm:C0}}
	
	\begin{proof}[Proof of Theorem~\ref{thm:ext-alpha}]
		(i): Assume that $c$ is not an extreme point of $\alpha$, i.e., there exists a segment $[c_0c_1]$ containing $c$ in its interior
		 such that $\alpha$ is affine on $[c_0c_1]$. Note that $c_0\neq c_1$.

		It follows that $\A_c\subset \A_{c_i}\ (i=0,1)$ by Proposition~\ref{prop:main}.
		Since $\pi(\A_c)=M$, using Mather's graph theorem,
		we have $\A_c= \A_{c_i}$ and $\pi(\A_{c_i})=M$ for $i=0,1$.
		By Eq.~\eqref{eqn:Fathi1} of Fathi's weak KAM theory,
		the Aubry set $ \A_{c_i}\ (i=0,1)$ is of the form
		\begin{align*}
			\A_{c_i}=\{(x,\eta_{c_i}(x)+du_{c_i}(x); x\in M\}
		\end{align*}
		for some closed 1-form $\eta_{c_i}$ with $[\eta_{c_i}]=c_i$
		and $C^1$-function $u_{c_i}:M\to \Rset$.
		The identity $\A_{c_0}=\A_{c_1}$ implies that
		\begin{align*}
			du_{c_0}(x)+\eta_{c_0}(x)=du_{c_1}(x)+\eta_{c_1}(x),\quad x\in M.
		\end{align*}
		Thus we deduce that
		\begin{align*}
			[\eta_{c_0}-\eta_{c_1}]=[d(u_{c_1}-u_{c_0})]=[0],
		\end{align*}
		which means $c_0=c_1$. This is contradiction.
		
		(ii): Assume that $\pi(\M_c)=M$ and $\alpha$ is differentiable at $c$.
		By the graph property of $\M_c$ and Eq.~\eqref{eqn:inclusions}, we have $\M_c=\A_c$.
		Moreover, $\pi(\A_c)=M$ and Remark~\ref{rmk:Aubry} imply that it is an invariant Lipschitz Lagrangian graph of the cohomology class $c$. 
		By the definition of the Mather set and Remark 3.1.11 in \cite{S15},
		there exists $\mu^* \in\m(L)$ such that ${\rm supp}\ \mu^*=\M_c$.
		Note that $\mu^*\in\m_c$ holds by Theorem~IV of \cite{CDI97}.
		Let $h^*=\rho(\mu^*)$ be the rotation vector of $\mu^*$.
		Since $\M_c={\rm supp} \mu^*\subset \M^{h^*}$ holds,
		by the graph property of $\M_c$ and $\pi(\M_c)=M$, we see that $\M_c=\M^{h^*}$ and $\pi(\M^{h^*})=M$.
		By Proposition~\ref{prop:rotation} and differentiability of $\alpha$ at $c$,
		we see that
		\[
			\rho(\m_c)=\partial\alpha(c)=\nabla\alpha(c).
		\]
		Therefore, $h^*=\nabla\alpha(c)$ holds.
		Consider $c' \in \partial\beta(h^*)$.
		Then $\M^{h^*}\subset \M_{c'}$ holds (see Lemma 3.5 in \cite{FGS09}). 
		Again by the graph property of $\M^{h^*}$ and $\pi(\M^{h^*})=M$, we obtain $\M_c=\M^{h^*}=\M_{c'}$.
		Since $\M_c$ is an invariant Lipschitz Lagrangian graph of the cohomology class $c$,
		we deduce that $c=c'$. Therefore, $\beta$ is differentiable at $h^*=\nabla\alpha(c)$.
		Using Theorem~\ref{thm:exposed}, we conclude that $c$ is an exposed point of $\alpha$.
	\end{proof}

	\begin{proof}[Proof of Theorem~\ref{thm:C0}]
		We borrow some techniques developed in \cite{M11}(Lemma 5, Proposition 4 and Theorem 3) and \cite{A15}(Proposition 5).\\
		(i): Let $H$ be a $C^0$ integrable Tonelli Hamiltonian
		and let $\Lambda_c$ be the invariant Lipschitz Lagrangian graph
		of cohomology class $c\in H^1(M;\Rset)$.
		By the graph property of $\Lambda_c$, for each $x\in M$, we can consider a map
		\begin{align*}
			F_x: c&\in H^1(M;\Rset)\mapsto v_c\in T^*_xM,
		\end{align*}
		where $\{v_c\}=\Lambda_c \cap T^*_xM$.
		Since the graphs $\{\Lambda_c\}_{c\in H^1(M;\Rset)}$ continuously foliate $T^*M$,
		the map $F_x$ is bijective and continuous.
		By the invariance of the Lagrange graph $\Lambda_c$, we see that $\Lambda_c$ is the graph of a solution of Hamilton-Jacobi equation $ H(x, \eta_c+du)=\alpha(c)$. Hence $H(\Lambda_c)=\alpha(c)$ holds.
		Now we show that $F_x$ is a closed map.
		For a compact subset $K\subset T_xM$,
		the set $F_x^{-1}(K)$ is closed subset in $H^{1}(M;\Rset)$.
		Moreover, for all $c\in F_x^{-1}(K)$, we have
		$\alpha(c)=H(\Lambda_c)=H(F_x(c))\in H(K)$.
		Since $K$ is compact and $H$ is continuous, $H(K)$ is a compact subset in $\Rset$.
		Thus, by the superlinearity of $\alpha$, $F_x^{-1}(K)$ is bounded.
		This implies that $F_x$ is proper.
		Since proper maps to locally compact spaces are closed, $F_x$ is a closed map.
		Consequently, $F_x$ is a bijective, continuous and closed map.
		Thus we deduce that $F_x$ is a homeomorphism.
		By the topological invariance of dimension, we have
		\[
			\dim H^1(M;\Rset)=\dim T^*_xM,
		\]
		which yields $\dim H^1(M;\Rset)=\dim M$.

		(ii):
		First we prove (1)$\Rightarrow$(2). Assume that $H$ is $C^0$ integrable.
		Then there exists a family of invariant Lipschitz Lagrangian graphs $\{\Lambda_c\}_{c\in H^1(M;\Rset)}$ and for each $c=[\eta_c]\in H^1(M;\Rset)$ the invariant Lipschitz Lagrangian graph $\Lambda_c$ is of the form $\Lambda_c=\{(x,\eta_c(x)+du_c(x));\ x\in M\}$ where $u$ is $C^{1,1}$-function on $M$.
		This implies $u$ is a weak KAM solution of $H(x,\eta_c+du_c)=\alpha(c)$.
	Hence we have $\Lambda_c\subset \N^*_c$ by Eq.~\eqref{eqn:Fathi2}.
		Since the cotangent bundle $T^*M$ is foliated by $\{\Lambda_c\}_{c\in H^1(M;\Rset)}$,
		we obtain
		\begin{align*}
			T^*M\quad=\bigcup_{c\in H^1(M;\Rset)} \Lambda_c\quad
			\subset\bigcup_{c\in H^1(M;\Rset)} \N_c.
		\end{align*}
		Thus the dual tiered Ma\~{n}\'{e} set
		\[
			\N^T_*=\bigcup_{c\in H^1(M;\Rset)} \N^*_c
		\] is the whole cotangent bundle $T^*M$.
		From the discussion in the proof of Theorem 1 (or Proposition 12) in \cite{A11},
		it follows that $\A^*_c=\N^*_c$ and it is the graph of a Lipschitz closed 1-form.
		Therefore, we see that $\pi(\A^*_c)=M$.

		Next we prove (2)$\Rightarrow$(1).
		Assume that (2) holds. By Remark~\ref{rmk:Aubry}, it follows that $\A^*_c$ is an invariant Lipschitz Lagrangian graph on $M$. Thus, as in the proof of (i), we can consider a map
		\begin{align*}
			F_x: c&\in H^1(M;\Rset)\mapsto v_c\in T^*_xM,
		\end{align*}
		where $\{v_c\}=\A^*_c \cap T^*_xM$. We now show that $F_x$ is a homeomorphism.
		By Theorem~\ref{thm:ext-alpha} (i), we see that $\alpha$ is strictly convex.
		Therefore, using Corollary~\ref{cor:C1-str}, we have $\A_c\cap\A_{c'}=\emptyset$ for any $c\neq c'\in H^1(M;\Rset)$, which implies that $F_x$ is injective.
		The rest of the proof is done similarly as in Proposition 5 in \cite{A15}.
	\end{proof}
	
\section{Examples}\label{sec:examples}
Finally, we investigate two examples: single-degree-of-freedom mechanical Hamiltonians and KAM tori.
	\subsection{Single-degree-of-freedom mechanical Hamiltonians}\label{subsec:1-degree}
	We first consider a single-degree-of-freedom mechanical Hamiltonian
	\begin{align*}
		H(x,p)=\frac{p^2}{2}+U(x)
	\end{align*}
	where $H:\Sset\times\Rset\to \Rset$ and $U:\Sset\to\Rset$ are $C^2$ functions.
	Note that the corresponding Lagrangian $L:\Sset\times\Rset\to \Rset$ is given by $L(x,v)=v^2/2-U(x)$.
	
	Let $c^*:=\int_0^1 \sqrt{2(\max U-U(x))}\ dx$. First, we consider the case $c^*\neq 0$.
	Let
	\begin{align*}
		C:=\{x\in \Sset; U(x)=\max U\},\quad \widetilde{C}=\{(x,0); x\in C\}.
	\end{align*}
	Note that $C\subsetneq M$ by $c^*>0$.
	For $E\ge\max U$, we set
	\begin{align*}
		c^\pm(E):=\pm\int_0^1 \sqrt{2(E-U(x))}dx.
	\end{align*}
	Then $c^+(E)$ (resp. $c^-(E)$) is strictly increasing (resp. decreasing) on $E$. Therefore, we can define its inverse $E^+(c):[c^*,+\infty)\to \Rset$ (resp. $E^-(c):(-\infty, -c^*] \to \Rset$) and it is also strictly increasing (resp. decreasing).
	Moreover, for $E\ge \max U$, let
	\begin{align*}
		\mathcal{P}^\pm_E=\left\{(x,v); v=\pm\sqrt{2(E-U(x))}, x\in \Sset\right\}.
	\end{align*}
	By the discussions of Sections 3.5 and 4.3 in \cite{S15}, we see that Mather's alpha function of $H$ is 
	\begin{align*}
		\alpha(c)=
		\begin{cases}
				0 & \mbox{for $c\in[-c^*,c^*]$};\\
				E^+(c) \quad & \mbox{for $c>c^*$};\\
				E^-(c) \quad & \mbox{for $c<-c^*$},
		\end{cases}
	\end{align*}
	and the Mather/Aubry sets are
	\begin{align*}
		\begin{cases}
				\M_c\quad=\quad \A_c\quad=\quad\widetilde{C} & \mbox{for $c\in[-c^*,c^*]$};\\
				\M_{\pm c}=\widetilde{C}\quad \subsetneq \quad \A_{\pm c}=\mathcal{P}^\pm_{\alpha(c)} \quad & \mbox{for $c=c^*$};\\
				\M_{\pm c} \quad=\quad \A_{\pm c}\quad=\quad\mathcal{P}^\pm_{\alpha(c)}\quad & \mbox{for $c>c^*$}.\\
		\end{cases}
	\end{align*}
	Note that $\alpha$ is $C^1$ since $\rho(\m_c)$ is a singleton for each $c\in\Rset$.
	We see that $\pi(\A_c)=M$ for the case $c=\pm c^*$ and
	$\pi(\M_c)=M$ for the case $|c|> c^*$.
	Moreover, $\A_c$ is a Lipschitz but not $C^1$ invariant Lagrange graph on $\Sset$ when $c=\pm c^*$
	and $\M_c$ is a $C^1$ invariant Lagrange graph on $\Sset$ when $|c|>c^*$.
	We now show that $c$ is an extreme but not exposed point of $\alpha$ if $c=\pm c^*$ and
	that $c$ is an exposed point of $\alpha$ if $|c|>c^*$. Since $\alpha(-c)=\alpha(c)$, we discuss only the case $c>0$.
	Assume that there exists a flat segment $[c_0,c_1]\subset [c^*,+\infty)$, i.e., $\alpha$ is affine on $[c_0,c_1]$. It follows from Proposition~\ref{prop:main} that $\alpha$ is constant on $[c_0,c_1]$, which contradicts strict monotonicity of $E^\pm(c)$.	
	
	Next, we consider the case $c^*=0$. In this case, $U(x)=\max U$ holds for all $x\in \Sset$, i.e., $U(x)$ is constant. Then $H(x,p)=p^2/2+\max U$.
	Moreover, we have $\alpha(c)=c^2/2+\max U$ and thus $\alpha$ is strictly convex and $C^1$.
	
	Summarizing, for a single-degree-of-freedom mechanical Hamiltonian of the form
		$H(x,p)=p^2/2+U(x)$ where $U$ is a $C^2$ potential function on $\Sset$,
		letting
		\begin{align*}
			c^*:=\int_0 ^1 \sqrt{2(\max U-U(x))}\ dx,
		\end{align*}
		we have: $\alpha$ is a $C^1$ function and
		\begin{itemize}
			\item[(i)] $c$ is a flat point of $\alpha$ if $|c|<c^*$. In this case, $\pi(\A_c)\subsetneq M$ holds and
			the Hamilton-Jacobi equation~\eqref{eqn:HJ} has
			a weak KAM solution which is Lipschitz but not $C^1$.
			\item[(ii)] $c$ is an extreme but not exposed point of $\alpha$ if $|c|=c^*\neq 0$.
			In this case, $\pi(\M_c)\subsetneq \pi(\A_c)=M$ holds and
			the Hamilton-Jacobi equation~\eqref{eqn:HJ} has
			a unique weak KAM solution (up to constants) which is $C^1$ but not $C^2$.
			\item[(iii)] $c$ is an exposed point of $\alpha$ if $|c|>c^*$.
			In this case, $\pi(\M_c)=M$ holds and
			the Hamilton-Jacobi equation~\eqref{eqn:HJ} has
			a unique weak KAM solution (up to constants) which is $C^2$.
		\end{itemize}
		Moreover, $\alpha$ is strictly convex if and only if $c^*=0$ if and only if $H(x,p)=p^2/2+\text{constant}$.
	
	\subsection{KAM tori}
	We next consider KAM tori. Let $H:\Tset^n\times\Rset^n\to \Rset$ be a Tonelli Hamiltonian.
	Following \cite{S15}, 
		$\mathcal{T}\subset \Tset^n\times\Rset^n$ is said to be
		a (maximal) KAM torus with rotation vector $h$ if
		\begin{itemize}
			\item[(1)]  $\mathcal{T}\subset \Tset^n\times\Rset^n$ is a $C^1$ Lagrangian graph over $\Tset^n$, i.e., $\mathcal{T}=\{(x, c+du); x\in\Tset^n\}$ where $c\in \Rset^n$ and $u\in C^2(\Tset^n,\Rset)$. 
			\item[(2)] $\mathcal{T}$ is invariant under the Hamiltonian flow $\Phi^H_t$ generated by $H$.
			\item[(3)] The Hamiltonian flow on $\mathcal{T}$ is conjugated to a uniform rotation on $\Tset^n$.
		\end{itemize}
	Then $u$ is a $C^2$ solution to the Hamilton-Jacobi equation~\eqref{eqn:HJ}.
	For the sake of simplicity, henceforth we assume that $h$ is rationally independent, as assumed in the original KAM theorem.
	
	Let $\mu^*$ be an ergodic invariant probability measure supported on a KAM torus
	$\mathcal{T}=\{(x, c+du); x\in\Tset^n\}$.
	By the rational independence of $h$, the orbit on $\mathcal{T}$ is dense in it and $\supp \ \mu^*=\mathcal{T}$ holds. Thus we have $\M_c=\mathcal{T}$ and $\pi(\M_c)=\Tset^n$.
	Since the Hamiltonian flow on $\mathcal{T}$ is conjugated to a uniform rotation on $\Tset^n$ and $\M_c=\mathcal{T}$ holds,
	all $\mu\in\m_c$ has the same rotation vector $h$.
	Hence, it follows from Proposition~\ref{prop:rotation} that Mather's alpha function of $H$ is differentiable at $c$.
	Therefore, by Theorem~\ref{thm:ext-alpha} (ii), we see that $c$ is an exposed point of Mather's alpha function of $H$.
	
	Summarizing, for a Tonelli Hamiltonian $H$ having a KAM torus $\mathcal{T}=\{(x, c+du); x\in\Tset^n\}$ with a rationally independent rotation vector $h$, we have:
	\begin{itemize}
		\item[(i)]  The Hamilton-Jacobi equation~\eqref{eqn:HJ} has a $C^2$ solution.
		\item[(ii)] $\pi(\M_c)=\Tset^n$ holds.
		\item[(iii)]  $c$ is an exposed and differentiable point of Mather's alpha function of $H$.
	\end{itemize}

\section*{Acknowledgement}
The author thanks Professors Daniel Massart and Alfonso Sorrentino for helpful comments on their results in \cite{M11}.
 





\end{document}